\documentclass[11pt,reqno]{amsart}

\usepackage{amssymb}
\usepackage[all]{xy}
\usepackage[inline]{enumitem}

  \textwidth 160mm
  \hoffset -18mm
  \textheight 220mm

  \parindent 0pt
  \parskip 1.5mm

 \newtheorem{theorem}{Theorem}[section]
 \newtheorem{proposition}[theorem]{Proposition}
 \newtheorem{corollary}[theorem]{Corollary}

 \newtheorem{lemma}[theorem]{Lemma}
 \newtheorem{example}[theorem]{Example}

 \newcommand{\ev}{\mathop{\rm ev}\nolimits}
 \newcommand{\cat}{\mathop{\rm cat}\nolimits}
 \newcommand{\nil}{\mathop{\rm nil}\nolimits}
 \newcommand{\TC}{\mathop{\rm TC}\nolimits}
 
 \newcommand{\height}{\mathop{\rm height}\nolimits}
 
 \newcommand{\Ker}{\mathop{\rm Ker}\nolimits}
 
 \newcommand{\zcl}{\mathop{\rm zcl}\nolimits}

 \newcommand{\ZZ}{{\mathbb{Z}}}
 
 \newcommand{\RR}{{\mathbb{R}}}
 \newcommand{\CC}{{\mathcal{C}}}

 \newcommand{\gkn}{G_k(\RR^n)}
 \newcommand{\fln}{\mathrm{Flag}(\RR^n)}
 \newcommand{\ow}{\overline{w}}

\begin{document}

\title{Topological complexity of real Grassmannians} 
\author[Petar Pave\v{s}i\'{c}]{Petar Pave\v si\'{c}$^{*}$}
\address{$^{*}$ Faculty of Mathematics and Physics, University of Ljubljana,
Jadranska 21,  1000 Ljubljana, Slovenija}
\email{petar.pavesic@fmf.uni-lj.si}
\thanks{$^{*}$ Supported by the Slovenian Research Agency program P1-0292 and grants N1-0083, N1-0064}

\date{\today}

\begin{abstract} 
We use some detailed knowledge of the cohomology ring of real Grassmann manifolds $\gkn$ 
to compute zero-divisor cup-length and estimate topological complexity of motion planning for $k$-linear subspaces in $\RR^n$.  
In addition, we obtain results about monotonicity of Lusternik-Schnirelmann category 
and topological complexity of $\gkn$ as a function of $n$. 
\ \\[3mm]
{\it Keywords}: topological complexity, Lusternik-Schnirelmann category, 
Stiefel-Whitney classes, Grassmann manifold, zero divisor cup-length \\[2mm]
{\it AMS classification: 55M30, 55S40} 
\end{abstract}

\maketitle

\section{Introduction}

Topological complexity is a homotopy invariant of a space that was introduced by M. Farber 
\cite{Farber:TC} in his study of motion planning in robotics. Roughly speaking, motion planning 
problem requires to 
find a continuous motion that transforms a mechanical system from some
given initial position to a desired final position. To give a precise mathematical 
statement one considers paths in a suitable \emph{configuration space}, i.e. the topological space
that describes all possible states of the mechanical system. In particular, Grassmann
manifolds of linear subspaces of $\RR^n$ arise naturally in the motion planning for geometric objects like lines or planes in Euclidean spaces. 

Given a path-connected configuration space $X$ of a mechanical system, we denote by  $X^I$ the space of all 
continuous paths $\alpha\colon I\to X$, and by $\ev \colon X^I\to X\times X$
the evaluation map  that to a path 
$\alpha$ assigns its end-points, $\ev(\alpha):=(\alpha(0),\alpha(1))$. 
A \emph{motion plan} in $X$ is a function that takes as input a pair of points 
$x,y\in X$, and returns as output a path $\alpha=\alpha(x,y)$ in $X$ starting at $x$ and ending 
at $y$. That is to say, a motion plan is a section $\alpha\colon X\times X\to X^I$ 
of the evaluation map. It is important to note that while the motion through the
configuration space is always assumed to be continuous, motion plans will be 
generally discontinuous.  In fact, one can easily show that a continuous motion plan
exists if and only if $X$ is contractible. In applications discontinuities appear 
as instabilities of the motion plan because small variations or errors in the input 
data can result in a completely different motion path of a robot. 
One is thus naturally led to consider motion plans whose degree of instability is as 
small as possible. 

Farber \cite{Farber:TC} defined the topological complexity $\TC(X)$ of a path-connected space $X$ 
as the minimal integer $n$ for which there exists an open cover $U_1,\ldots,U_n$ of 
$X\times X$ and a motion plan $\alpha\colon X\times X\to X^I$, such that the restrictions 
of $\alpha$ to each $U_i$ is continuous. We will refer to Farber's monograph 
\cite[Chapter 4]{Farber:Invitation} for the relevant definitions and results 
on $\TC(X)$. In particular, see \cite[Section 4.2]{Farber:Invitation}
for several alternative descriptions of topological complexity and for its relation 
with the instability of motion plans. 

The Grasmann manifold of 1-dimensional subspaces of $\RR^n$ is the familiar projective
space $\RR P^{n-1}$. Topological complexity of projective spaces was studied by 
Farber, Tabachnikov and Yuzvinsky \cite{FTY:TC of RPn}, who found a remarkable
relation between $\TC(\RR P^n)$ and the classical immersion dimension of $\RR P^n$, which gave 
precise values for the topological complexity of real projective spaces in a range of dimensions. Soon afterwards 
Pearson and Zhang \cite{Pearson-Zhang} published a short note in which they produced some estimates 
for the topological 
complexity of higher Grassmannians. Unfortunately, their work was marred by a trivial
error that led them to assume that $\TC(X)=\cat(X\times X)$ (while the correct relation is 
$\cat(X)\le \TC(X)\le\cat(X\times X)$, see below) which compromised all of their
results.  Note that even after correcting the error 
in \cite{Pearson-Zhang}, the methods used in that paper would give an estimate for $\TC(\gkn)$ that 
is only slightly better than the standard category estimate. 
Through a careful analysis of the cohomology of Grassmannians we were able to find a different set of non-trivial
products and derive new lower bounds $\TC(\gkn)$ that are from two to three times better 
than those that were previously known.

The next two sections contain preparatory material. In the first we briefly review the main
concepts of Lusternik-Schnirelmann category and topological complexity, while in the second 
we summarize some technical results on Grassmann manifolds and their cohomology rings, and 
prove the non-triviality of certain cohomology classes which will serve as a base for our 
computations.
Section 4 is the central part of the article and is dedicated to the estimates of topological 
complexity of Grassmannians. The main results
are Theorems \ref{thm:g2n}, \ref{thm:g3n}, \ref{thm:gkn}  and \ref{thm:g4n} where explicit lower bounds for 
the topological 
complexity of manifolds $G_k(\RR^n)$ are given for $k=2$, $k=3$ and $k>3$. 
In the last section we use the above estimates to show that under suitable
assumptions the Lusternik-Schnirelmann category and the topological complexity of $\gkn$
are increasing functions of $n$.  

\ \\

\section{Review of category and topological complexity}
\label{sec:LScat & TC}

Here we recall main properties of Lusternik-Schnirelmann category and 
topological complexity. Note that in both cases we follow the non-normalized convention,
i.e., category and complexity of contractible spaces is equal to 1 (as opposed to 0 
in the normalized case). 
More details can be found in the monographs \cite{CLOT} and \cite{Farber:Invitation}.

The \emph{Lusternik-Schnirelmann category} of a space $X$, denoted $\cat(X)$, is the minimal 
integer $n$ for which there exists an open cover $U_1,\ldots,U_n$ of $X$ such that each $U_i$ can be
deformed in $X$ to a point. Main properties of $\cat(X)$ are listed in the following
proposition.

\begin{proposition}\ \\[-6mm]
\label{prop:LScat properties}
\begin{enumerate}
\item $\cat(X)=1$ if, and only if $X$ is contractible;
\item Homotopy invariance: $X\simeq Y \Rightarrow \cat(X)=\cat(Y)$;
\item Dimension-connectivity estimate: if $X$ is $d$-dimensional and $(c-1)$-connected, then 
$\cat(X)\le\frac{d}{c}+1$;
\item Cohomological estimate: $\cat(X)\ge \nil\widetilde H^*(X)$, where 
$\widetilde H^*(X)$ is the ideal of positive-dimensional cohomology classes in $H^*(X)$;
\item Product formula: $\cat(X\times Y)\le \cat(X)+\cat(Y)-1$.
\end{enumerate}
\end{proposition}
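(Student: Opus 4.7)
The plan is to verify the five parts by the classical methods of Lusternik–Schnirelmann theory; I would treat (1) and (2) together, then (3), (4) and (5) by independent arguments, leaning on the references \cite{CLOT} and \cite{Farber:Invitation} for the technical underpinnings.

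Items (1) and (2) follow by unraveling the definition. A categorical cover consisting of a single open set $U_1=X$ is precisely a null-homotopy of the identity $1_X$, so $\cat(X)=1$ is equivalent to contractibility. For homotopy invariance, given a homotopy equivalence $f\colon X\to Y$ with homotopy inverse $g$ and a categorical cover $V_1,\ldots,V_n$ of $Y$, the preimages $f^{-1}(V_i)$ form an open cover of $X$; each $f^{-1}(V_i)$ is contractible in $X$ because a null-homotopy of the inclusion $V_i\hookrightarrow Y$ can be transported by $g$ and composed with $g\circ f\simeq 1_X$. Exchanging the roles of $X$ and $Y$ yields $\cat(X)=\cat(Y)$.

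For (3), after replacing $X$ by a homotopy-equivalent CW complex via (2), use $(c-1)$-connectivity to model $X$ by a complex with a single $0$-cell and no cells in dimensions $1,\ldots,c-1$. Partition the remaining cells into $\lceil d/c\rceil+1$ blocks of consecutive dimensions, each spanning at most $c-1$ dimensions, and thicken each block to an open neighbourhood that deformation-retracts onto a wedge of spheres and is therefore contractible in $X$. The delicate step is arranging the thickenings so that every cell is used exactly once; this is the content of Ganea's skeletal construction. For (4), given a categorical cover $U_1,\ldots,U_n$, each inclusion $U_i\hookrightarrow X$ is null-homotopic, so every $u_i\in\widetilde H^*(X)$ lifts to a relative class $\tilde u_i\in H^*(X,U_i)$. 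The relative cup product then places
\[
\tilde u_1\smile\cdots\smile\tilde u_n \;\in\; H^*(X,U_1\cup\cdots\cup U_n)=H^*(X,X)=0,
\]
so its image $u_1\smile\cdots\smile u_n$ in $H^*(X)$ vanishes, giving $\nil \widetilde H^*(X)\le n$.

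For (5), start with categorical covers $U_1,\ldots,U_m$ of $X$ and $V_1,\ldots,V_n$ of $Y$. The $mn$ products $U_i\times V_j$ form a categorical cover of $X\times Y$, and the trick, going back to Svarc, is to amalgamate them along anti-diagonals. After a mild shrinking $U_i\leadsto U_i'$ and $V_j\leadsto V_j'$ obtained from a subordinate partition of unity, the products $U_i'\times V_j'$ with fixed $i+j$ become pairwise disjoint, so one may set
\[
W_k:=\bigsqcup_{i+j=k+1} U_i'\times V_j' \qquad (k=1,\ldots,m+n-1).
\]
Each $W_k$ is open and a disjoint union of sets contractible in $X\times Y$, hence itself categorical, which yields the bound $\cat(X\times Y)\le m+n-1$. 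The main obstacle across the five items is the shrinkage in (5): engineering the partition of unity so that the $W_k$ remain open and still cover $X\times Y$ is the only place where a genuine point-set construction enters the argument.
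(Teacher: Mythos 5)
The paper offers no proof of this proposition at all: it is a summary of standard facts about Lusternik--Schnirelmann category, justified only by reference to \cite{CLOT} and \cite{Farber:Invitation}. Your sketches are the standard textbook arguments for each item --- (1)--(2) from the definition, (4) via relative cup products and the vanishing of $H^*(X,X)$, (5) via the partition-of-unity/anti-diagonal amalgamation --- and they are essentially correct; there is nothing in the paper to diverge from, so the only meaningful comparison is with the cited sources, which prove these statements the same way you do.

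One quantitative slip in (3) is worth fixing: with one $0$-cell and the remaining cells in dimensions $c,\ldots,d$, grouping into blocks of at most $c$ consecutive dimensions yields $\lceil (d-c+1)/c\rceil=\lfloor d/c\rfloor$ blocks, hence $\lfloor d/c\rfloor+1$ categorical sets in total, which is what the stated bound $\cat(X)\le \tfrac{d}{c}+1$ requires. Your count of $\lceil d/c\rceil+1$ is one too many whenever $c\nmid d$ (e.g.\ $d=5$, $c=2$ gives $4>\tfrac{5}{2}+1$), so as written the construction does not establish the inequality in that case. Also, the assertion that each thickened block deformation-retracts onto a wedge of spheres is neither needed nor true in general; what the argument actually requires is only that the inclusion of each block into $X$ be null-homotopic, which is where the $(c-1)$-connectivity enters (via the Blakers--Massey/Ganea argument, or equivalently via the connectivity of the fat-wedge inclusion in the Whitehead characterization of $\cat$). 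With those two adjustments your outline matches the proofs in \cite{CLOT}.
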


In (4) $H^*(X)$ denotes cohomology with any ring coefficients, and the nilpotency 
$\nil\widetilde H^*(X)$ of the ideal $\widetilde H^*(X)$ is
the minimal integer $n$ for which $\widetilde H^*(X)^n=0$ (i.e., all products of $n$ elements of 
$\widetilde H^*(X)$ are equal to 0). 
The invariant $\cat(X)$ was extensively studied and it was computed exactly or to a good approximation
for a wide variety of spaces (see \cite{CLOT}). However, the knowledge of the LS-category of real Grassmannians, which is of interest for this paper, is still very scant. 

We have already mentioned Farber's topological complexity and its applications to
robotics. The following definition displays a strong analogy with Lusternik-Schnirelmann category 
(and is easily seen to be equivalent to the one we gave earlier in terms of continuous motion plans).
The \emph{topological complexity} of a path-connected space $X$, denoted $\TC(X)$, is the minimal 
integer $n$ for which there exists an open cover $U_1,\ldots,U_n$ of $X\times X$ such that each $U_i$ can be
deformed in $X\times X$ to the diagonal $\Delta X$. Again we collect the main properties of this concept 
in a proposition.

\begin{proposition}\ \\[-6mm]
\label{prop:TC properties}
\begin{enumerate}
\item $\TC(X)=1$ if, and only if $X$ is contractible;
\item Homotopy invariance: $X\simeq Y \Rightarrow \TC(X)=\TC(Y)$;
\item Category estimate: $\cat(X)\le \TC(X)\le \cat(X\times X)$;
\item If $X$ is a topological group, then $\TC(X)=\cat(X)$;
\item Cohomological estimate: $\TC(X)\ge \nil(\Ker\Delta^*)$, where 
$$\Delta^*\colon H^*(X\times X)\to H^*(X)$$ 
is induced by the diagonal $\Delta\colon X\to X\times X$;
\item Product formula: $\TC(X\times Y)\le \TC(X)+\TC(Y)-1$.
\end{enumerate}
\end{proposition}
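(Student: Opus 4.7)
The six items split naturally by the tool they use. Properties (1), (2), and (6) are direct manipulations of covers. For (1), $\TC(X)=1$ is equivalent to the existence of a global continuous motion plan $s\colon X\times X\to X^I$; fixing $x_0\in X$ and restricting $s$ to $\{x_0\}\times X$ produces a homotopy from the constant map at $x_0$ to the identity on $X$, so $X$ is contractible. Item (2) follows by pulling back an optimal $\TC$-cover of $Y\times Y$ along $f\times f$ for a homotopy equivalence $f\colon X\to Y$ and transporting deformations via a homotopy inverse. For (6), I would recast both sides as Schwarz genera of the free path fibrations $\ev_X$ and $\ev_Y$, and use the general inequality for the sectional category of a product of fibrations; equivalently, one can argue directly by combining covers $\{U_i\}$ and $\{V_j\}$ that realise $\TC(X)$ and $\TC(Y)$ into a cover of $(X\times Y)\times(X\times Y)$ of size $\TC(X)+\TC(Y)-1$ via a standard diagonal-grouping argument.

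For (3), I would prove the two inequalities separately. For $\cat(X)\le\TC(X)$, fix $x_0\in X$ and consider $j\colon X\to X\times X$, $x\mapsto(x,x_0)$; pulling back an optimal $\TC$-cover $\{U_i\}$ gives an open cover of $X$, and composing the deformation of $U_i$ into $\Delta X$ with the second projection contracts $j^{-1}(U_i)$ to $x_0$. For $\TC(X)\le\cat(X\times X)$, each categorical open set $V\subset X\times X$ deforms to some point $p\in X\times X$; concatenating with a path in $X\times X$ from $p$ to any chosen element of $\Delta X$ (available since $X\times X$ is path-connected) extends the deformation to one ending in $\Delta X$, and the number of sets does not change.

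Item (4) uses the group structure. The plan is to exploit the shearing homeomorphism $\sigma\colon X\times X\to X\times X$, $\sigma(x,y)=(xy^{-1},y)$, whose inverse is $(u,y)\mapsto(uy,y)$ and which carries $\Delta X$ onto $\{e\}\times X$. Given an optimal categorical cover $\{W_i\}$ of $X$ with contractions $H_i\colon W_i\times I\to X$ to $e$ (which one may arrange by left-translation since $X$ is a group), the open sets $\sigma^{-1}(W_i\times X)=\{(x,y):xy^{-1}\in W_i\}$ cover $X\times X$, and the homotopies $(x,y,t)\mapsto(H_i(xy^{-1},t)\,y,\,y)$ deform each of them into $\Delta X$. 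This gives $\TC(X)\le\cat(X)$, and combined with (3) yields the equality.

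Item (5), the cohomological estimate, is the step I expect to require the most care, since it is the only one that leaves the realm of open-cover manipulations. The plan is first to recast $\TC(X)$ as the Schwarz genus of the evaluation fibration $\ev\colon X^I\to X\times X$ (equivalent to the open-cover definition by a partition-of-unity argument on normal spaces), then to observe that $\ev$ is fibre-homotopy equivalent to the diagonal $\Delta\colon X\to X\times X$ because $X^I$ deformation retracts onto $X$ via evaluation at $0$, and finally to invoke Schwarz's cup-length bound for sectional category: if a fibration admits local sections over an $n$-element open cover, then every $n$-fold product of classes in the kernel of its induced cohomology map vanishes. Since $\Ker\ev^*=\Ker\Delta^*$, this yields $\nil(\Ker\Delta^*)\le\TC(X)$. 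The delicate ingredient is Schwarz's theorem on the sectional category of a fibration, which is where the argument genuinely uses more than the definition and which I would therefore cite from \cite{Farber:Invitation} rather than reprove.
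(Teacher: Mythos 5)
Your proofs are correct and are the standard arguments; the paper itself offers no proof of this proposition but cites \cite{CLOT} and \cite{Farber:Invitation} (with \cite{Schwarz} supplying the sectional-category cup-length bound underlying item (5)), and those sources contain essentially the arguments you outline, including the shearing homeomorphism for (4) and the identification of $\TC$ with the Schwarz genus of $\ev\colon X^I\to X\times X$ for (5). The only loose point is in your proof of $\cat(X)\le\TC(X)$ in (3): projecting the deformation of $U_i$ into $\Delta X$ onto the second factor alone gives a homotopy of the \emph{constant} map, so to contract $j^{-1}(U_i)$ you must concatenate the first-projection homotopy (from the identity to $x\mapsto p(x)$) with the reverse of the second-projection homotopy (from $p(x)$ back to $x_0$) — a one-line fix that does not affect the count of open sets.
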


We will also need two special results. 

\begin{proposition}(see \cite[Theorem 8.23]{CLOT} and \cite[Lemma 28.1]{Farber:TRMP}) 
\label{prop:symplectic}
Let $X$ be a simply-connected symplectic manifold. Then $\cat(X)=\dim(X)/2+1$ and $\TC(X)=\dim(X)+1$. 
\end{proposition}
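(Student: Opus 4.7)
The plan is to handle the category and topological complexity statements in turn, using the fact that a symplectic form $\omega$ on $X$ gives rise to a cohomology class $[\omega]\in H^2(X;\RR)$ whose top power $[\omega]^n$ (where $2n=\dim X$) is a non-zero multiple of the orientation class, while $[\omega]^{n+1}=0$ for dimensional reasons.

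For $\cat(X)$, I would write $\dim(X)=2n$ and argue both bounds separately. The upper bound is immediate from the dimension-connectivity estimate in Proposition \ref{prop:LScat properties}(3): since $X$ is simply-connected it is $1$-connected, so $c=2$, giving $\cat(X)\le 2n/2+1=n+1$. For the matching lower bound I would invoke the cohomological estimate in Proposition \ref{prop:LScat properties}(4) with real coefficients: since $[\omega]^n\neq 0$, the ideal $\widetilde H^*(X;\RR)$ has nilpotency at least $n+1$, hence $\cat(X)\ge n+1$. Together these yield $\cat(X)=n+1=\dim(X)/2+1$.

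For $\TC(X)$ the upper bound follows by combining the category estimate and the product formula in Proposition \ref{prop:LScat properties}: $\TC(X)\le\cat(X\times X)\le 2\cat(X)-1=2n+1$. The lower bound is where the content lies. I would produce an explicit zero-divisor of high power. Namely, let
\[
u:=[\omega]\otimes 1-1\otimes[\omega]\in H^2(X\times X;\RR).
\]
Because $\Delta^*(a\otimes b)=ab$, the class $u$ lies in $\Ker\Delta^*$. I then expand $u^{2n}$ by the binomial theorem,
\[
u^{2n}=\sum_{k=0}^{2n}(-1)^{2n-k}\binom{2n}{k}[\omega]^k\otimes[\omega]^{2n-k},
\]
and observe that any term with $k\neq n$ vanishes: if $k<n$ then $2n-k>n$, so $[\omega]^{2n-k}=0$, and symmetrically for $k>n$. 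Only the middle term survives, giving
\[
u^{2n}=(-1)^n\binom{2n}{n}\,[\omega]^n\otimes[\omega]^n,
\]
which is non-zero by the K\"unneth theorem. Thus $\nil(\Ker\Delta^*)\ge 2n+1$, and Proposition \ref{prop:TC properties}(5) gives $\TC(X)\ge 2n+1$.

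The only place where anything beyond formal manipulation is required is the vanishing assertion $[\omega]^{n+1}=0$, which is automatic from $\dim X=2n$, together with the classical fact that $[\omega]^n$ is a non-zero top cohomology class (equivalently, that $\omega^n$ is a volume form on the compact manifold $X$). Once these two facts are in place, the surviving middle term in the binomial expansion is the key observation and the rest is bookkeeping.
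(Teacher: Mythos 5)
Your argument is correct and is exactly the standard proof from the cited sources ([CLOT, Thm.~8.23] and [Farber:TRMP, Lemma~28.1]); the paper itself states this proposition by reference and gives no proof, so there is nothing to diverge from. The only point worth making explicit is that the hypothesis must include $X$ closed (otherwise $[\omega]^n$ need not be a nonzero top class, e.g.\ $X=\RR^{2n}$), which you implicitly use and acknowledge when you call $\omega^n$ a volume form on the compact manifold.
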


\begin{proposition}(see \cite[Theorem 1]{Costa-Farber}) 
\label{prop:small pi1}
If $X$ is a CW-complex with $\pi_1(X)=\ZZ_2$, then $\TC(X)\le 2\dim X$. 
Furthermore, if $X$ is a closed $n$-manifold, and the generator $w\in H^1(X;\ZZ_2)=\ZZ_2$ 
satisfies $w^n=0$, then $\TC(X)\le 2\dim X-1$. 
\end{proposition}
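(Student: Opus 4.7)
The plan is to bound $\TC(X) = \mathrm{secat}(\ev : X^I \to X \times X)$ by refining Farber's universal estimate $\TC(X) \le \dim(X \times X) + 1 = 2 \dim X + 1$, which itself follows from Schwarz's dimensional bound for the sectional category of any fibration over a CW-base with non-empty fiber. The refinement exploits the hypothesis $\pi_1(X) = \ZZ_2$ by reducing the problem to a simply connected one via the universal cover.

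Concretely, I would let $\pi : \widetilde X \to X$ be the universal cover, a regular $\ZZ_2$-cover with non-trivial deck transformation $\tau$, and form the double cover
\[
p \colon Q := (\widetilde X \times \widetilde X)/\Delta \ZZ_2 \;\longrightarrow\; X \times X,
\]
classified by $\Phi := w \otimes 1 + 1 \otimes w \in H^1(X \times X; \ZZ_2)$, where $w$ generates $H^1(X; \ZZ_2)$. The key geometric feature is that on any open $U \subseteq X \times X$ over which $p$ is trivial, a continuous choice of sheet determines coherent lifts $(\widetilde x, \widetilde y) \in \widetilde X \times \widetilde X$ for every $(x, y) \in U$; since $\widetilde X$ is simply connected, these lifts pin down a unique homotopy class of paths from $x$ to $y$ in $X$. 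Selecting an honest continuous path representative within this class then reduces to a Schwarz-type continuous-path selection problem on the simply connected $\widetilde X$, which is dimensionally tractable. Combining the sectional-category estimate for $p$ with the one for the simply connected path selection and tallying carefully yields $\TC(X) \le 2 \dim X$.

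For the closed $n$-manifold refinement under $w^n = 0$, I would analyse the top-dimensional obstruction to a section of $\ev$, which lives in a cohomology group controlled by powers of $\Phi$. By the Künneth expansion
\[
\Phi^{2n} = \sum_{i=0}^{2n} \binom{2n}{i}\, w^i \otimes w^{2n-i},
\]
all summands vanish dimensionally except possibly $i = n$, and that term contains $w^n \otimes w^n$, which is zero by hypothesis. Hence the top obstruction vanishes and one further piece of the cover of $X \times X$ can be eliminated, giving $\TC(X) \le 2 \dim X - 1$.

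The main obstacle will be the quantitative bookkeeping in the passage from $Q$ down to $X \times X$: naively combining Schwarz's dimensional bound for the double cover $p$ with the path-selection bound over $\widetilde X$ only recovers $2 \dim X + 1$, and saving exactly one piece requires leveraging a feature specific to $\ZZ_2$-covers — namely, that trivialising open sets for $p$ are precisely those on which $\Phi$ pulls back to zero in $\ZZ_2$-cohomology, which permits two separately optimal covers to be amalgamated into one of strictly smaller cardinality than the product of their sizes would suggest.
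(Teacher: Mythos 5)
This proposition is not proved in the paper at all: it is quoted from Costa and Farber \cite[Theorem 1]{Costa-Farber}, whose argument runs through Schwarz's obstruction theory for the sectional category of $\ev\colon X^I\to X\times X$. There the primary obstruction is a canonical class $\mathfrak{v}\in H^1(X\times X;I(\ZZ_2))$ with coefficients in the augmentation ideal of $\ZZ[\ZZ_2]$ (a \emph{twisted} copy of $\ZZ$, whose mod $2$ reduction is your $\Phi=w\times 1+1\times w$); one has $\TC(X)\le k$ once all obstructions to sectioning the $k$-fold fiberwise join of $\ev$ vanish, and for $k=2n$ the only obstruction is $\mathfrak{v}^{2n}$, which is killed using the evenness of ${2n \choose n}$ together with a torsion argument in the twisted integral coefficients. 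Measured against this, your first part is a plan rather than a proof. The covering $(\widetilde X\times\widetilde X)/\Delta\ZZ_2\to X\times X$ classified by $\Phi$ is indeed the right geometric object (its local sections are exactly the consistent choices of homotopy class of path), but you stop precisely where the work begins: you concede that combining the sectional category of the cover with that of the path-selection problem over $\widetilde X$ does not beat $2\dim X+1$, and the ``amalgamation'' of two open covers into one of smaller cardinality that you appeal to is asserted, not constructed. No such general amalgamation lemma exists; the known mechanism for saving one open set is not a recombination of covers but the vanishing of a single top-degree obstruction class.

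The second part contains concrete errors beyond incompleteness. First, the power is wrong: to prove $\TC(X)\le 2n-1$ you must section the $(2n-1)$-fold join, whose primary obstruction is the $(2n-1)$-st power of the canonical class in degree $2n-1$; and since the fiber of that join is only $(2n-3)$-connected there is an additional obstruction in degree $2n$. Computing $\Phi^{2n}$ addresses neither of these — at best it pertains to the bound $\TC(X)\le 2n$ — and ``the top obstruction vanishes, so one further piece can be eliminated'' is a non sequitur: vanishing of the obstruction to $\TC\le k$ yields $\TC\le k$, not $\TC\le k-1$. Second, the coefficients are wrong: the obstruction is an integral class with twisted coefficients, and vanishing of its mod $2$ reduction $\Phi^k$ does not imply it vanishes; Costa--Farber need a separate Bockstein/torsion argument for this. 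Third, a symptom of both problems: the $i=n$ term of your expansion is ${2n \choose n}\,w^n\otimes w^n$, and ${2n \choose n}$ is \emph{always} even, so your mod $2$ computation returns zero with or without the hypothesis $w^n=0$ — the hypothesis cannot be doing its work where you have placed it. In the actual proof it enters, together with Poincar\'e duality on the closed manifold $X\times X$, in killing the genuine degree-$(2n-1)$ and degree-$2n$ obstructions with their correct twisted integer coefficients.
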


\ \\

\section{Review of finite Grassmannians and their cohomology}
\label{sec:Grassmannians}

Let $\gkn$ denote the space of all $k$-dimensional linear subspaces of $\RR^n$. 
This space is a manifold (actually a non-singular algebraic variety) of dimension $k(n-k)$. The
correspondence between a subspace and its orthogonal complement determines a homeomorphism between 
$\gkn$ and $G_{n-k}(\RR^n)$, so we will always tacitly assume that $k\le n/2$.

The space $\gkn$ has a standard CW-decomposition which we briefly recall 
following \cite[Section 6]{Milnor-Stasheff}. For each element (i.e., $k$-subspace of $\RR^n$) 
$V\in\gkn$ consider the increasing sequence of integers
$$0\le\dim(V\cap \RR)\le\dim(V\cap \RR^2)\le\cdots\le\dim(V\cap \RR^n)=k,$$
where two consecutive terms are either equal or differ by 1, so that there are exactly
$k$ 'jumps' of the form $\dim(V\cap \RR^{d_{i}})=\dim(V\cap \RR^{d_{i-1}})+1$ for $i=1,2,\ldots,k$. 
The \emph{Schubert symbol} of $V$ is defined as a sequence of integers
$$\sigma(V)=(d_1-1,d_2-2,\ldots,d_k-k).$$
Note that for every Schubert symbol $\sigma(V)=(\sigma_1,\ldots,\sigma_k)$ we have 
$0\le\sigma_1\le\ldots\le\sigma_k\le n-k$, and that every such sequence can appear for some $V\in \gkn$.
Thus, for every non-increasing sequence $\sigma$ of integers between 0 and $n-k$ let
$$e_\sigma:=\{V\in\gkn\mid \sigma(V)=\sigma\}.$$
It turns out (see \cite[Theorem 6.4]{Milnor-Stasheff}) that $e_\sigma$ is homeomorphic to an open cell
of dimension $\sigma_1+\ldots+\sigma_k$, and that these cells form a CW-decomposition of $\gkn$. 
There is one 0-cell corresponding to the symbol $(0,\ldots,0)$ and one top-dimensional 
$k(n-k)$-cell with symbol $(n-k,\ldots,n-k)$. In general the number of $d$-dimensional
cells is equal to the number of partitions of $d$ as a sum of $k$ integers between 0 and $n-k$,
so that there are  ${n \choose k}$ cells in total.

There is a natural embedding $\gkn\hookrightarrow G_k(\RR^{n+1})$ obtained by 
viewing every $k$-dimensional subspace of $\RR^n$ as a subspace of $\RR^n\oplus\RR=\RR^{n+1}$.
It is easy to see that $\gkn$ is actually a subcomplex of $G_k(\RR^{n+1})$ with respect to the 
Schubert decomposition and that cells up to dimension $(n-k)$ coincide, i.e.
$$\gkn^{(n-k)}=G_k(\RR^{n+1})^{(n-k)}.$$

We will base our computations on the $\ZZ_2$-cohomology ring of $\gkn$ which can be described 
as follows (cf. \cite{Stong}). 
Let $w_1,\ldots,w_k\in H^*(\gkn;\ZZ_2)$ denote the Stiefel-Whitney classes of the canonical
$k$-dimensional vector bundle over $\gkn$, and let $\ow_1,\ldots,\ow_{n-k}\in H^*(\gkn;\ZZ_2)$
denote the dual  Stiefel-Whitney classes 
(i.e., the Stiefel-Whitney classes of the orthogonal complement of the canonical bundle).
The Stiefel-Whitney classes and their duals are related by the formula
$$w\cdot\ow=(1+w_1+\ldots + w_k)\cdot(1+\ow_1+\ldots+\ow_{n-k})=1, $$ 
so the dual classes can be recursively expressed as polynomials in the variables $w_1,\ldots,w_k$.
Then $H^*(\gkn;\ZZ_2)$ can be described as the quotient of the $\ZZ_2$-polynomial ring,
generated by the Stiefel-Whitney classes and their duals, modulo the relation $w\cdot \ow=1$:
$$ H^*(\gkn;\ZZ_2)\cong\ZZ_2[w_1,\ldots,w_k,\ow_1,\ldots,\ow_{n-k}]/(w\cdot \ow=1).$$

\begin{example}
In the cohomology of $G_2(\RR^6)$ we have the relation 
$$(1+w_1+w_2)\cdot (1+\ow_1+\ow_2+\ow_3+\ow_4)=1,$$ 
from which 
we obtain $\ow_1=w_1,\ \ow_2=w_1^2+w_2,\  \ow_3=w_1^3,\ \ow_4=w_1^4+w_1^2w_2+w_2^2,
\  w_1^5+w_1w_2^2=0$ and $w_1^4w_2+w_1^2w_2^2+w_2^3=0$. Therefore
$$ H^*(G_2(\RR^6);\ZZ_2)\cong\ZZ_2[w_1,w_2]/(w_1^5+w_1w_2^2,\ w_1^4w_2+w_1^2w_2^2+w_2^3).$$
\end{example}

In spite of a very explicit description, computations in $H^*(\gkn;\ZZ_2)$ are all but straightforward,
because it is often extremely difficult to determine whether a given polynomial in $w_1,\ldots,w_k$ is 
contained in the
ideal. R.~Stong \cite[p. 103-104]{Stong} (following earlier work by H.L.~Hiller) used a different
approach to check if a given product of Stiefel-Whitney classes is non-trivial in $H^*(\gkn;\ZZ_2)$.
He considered the natural map 
$$\pi\colon\fln\to\gkn,$$
where $\fln$ is the space of all ordered $n$-tuples $(V_1,\ldots,V_n)$ ('flags') of mutually 
orthogonal 1-dimensional subspaces of $\RR^n$ and $\pi(V_1,\ldots,V_n)=V_1\oplus\ldots\oplus V_k)$. 
The main facts about the cohomology of $\fln$ and the homomorphism $\pi^*$ are collected 
in the following proposition.

\begin{proposition}(see Stong \cite[pp.~104-106]{Stong})
\label{prop:prop pi*}
\begin{enumerate}
\item $H^*(\fln;\ZZ_2)\cong\ZZ_2[e_1,\ldots,e_n]/(\prod_{i=1}^n (1+e_i)=1)$, where $e_i=w_1(L_i)$ for 
certain canonical line bundles $L_1,\ldots, L_n$ over $\fln$.
\item $e_i^n=0$ in $H^*(\fln;\ZZ_2)$ for $i=1,\ldots,n$.
\item a monomial $e_1^{i_1}\ldots e_n^{i_n}$ in the top dimension $H^{n \choose 2}(\fln;\ZZ_2)$ is 
non-zero if, and only if, all exponents are different, i.e., if $(i_1,\ldots i_n)$ is a permutation
of $\{0,\ldots, n-1\}$.
\item $\pi^*\colon H^*(\gkn;\ZZ_2)\to H^*(\fln;\ZZ_2)$ is injective and is given explicitly as
$$\pi^*(1+w_1+\ldots+w_k)=\prod_{i=1}^k(1+e_i),\ \ \text{and}\ \ 
\pi^*(1+\ow_1+\ldots+\ow_{n-k})=\prod_{i=k+1}^n(1+e_i);$$
In other words, $\pi^*(w_i)$ is the $i$-th elementary symmetric polynomial in the variables 
$e_1,\ldots,e_k$ and $\pi^*(\ow_i)$ is the $i$-th elementary symmetric polynomial in the variables 
$e_{k+1},\ldots,e_n$.
\item An element $x\in H^*(\gkn;\ZZ_2)$ is non-zero if, and only if, the product
$$\pi^*(x)\cdot (e_1^{k-1}e_2^{k-2} \ldots e_{k-1})\cdot(e_{k+1}^{n-k-1}\ldots e_{n-1})\in 
H^*(\fln;\ZZ_2)$$
is non-zero.
\end{enumerate}
\end{proposition}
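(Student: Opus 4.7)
The plan is to treat the five parts in a logical order that exploits the tower of projective bundles underlying $\fln$. For (1), I would realize $\fln$ as an iterated projective bundle: the map $\fln\to G_1(\RR^n)=\RR P^{n-1}$ remembering only $V_1$ has fiber the flag manifold $\mathrm{Flag}(V_1^\perp)\cong\mathrm{Flag}(\RR^{n-1})$, and iterating this, the Leray--Hirsch theorem presents $H^*(\fln;\ZZ_2)$ as a polynomial algebra on $e_1,\ldots,e_n$, with the defining relation forced by the Whitney sum formula applied to $L_1\oplus\cdots\oplus L_n\cong\RR^n$ (trivial rank $n$ bundle). For (2), each line bundle $L_i$ embeds as a subbundle of the trivial $\RR^n$-bundle, so its classifying map factors through $G_1(\RR^n)=\RR P^{n-1}$, where the first Stiefel--Whitney class satisfies $w_1^n=0$; pulling back yields $e_i^n=0$. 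For (4), the formula $\pi^*(1+w_1+\cdots+w_k)=\prod_{i=1}^k(1+e_i)$ is immediate from $\pi^*(\gamma_k)\cong L_1\oplus\cdots\oplus L_k$, where $\gamma_k$ is the canonical $k$-plane bundle over $\gkn$, together with the Whitney sum formula; the formula for $\pi^*(\ow_i)$ follows analogously from $\gamma_k^\perp$. Injectivity of $\pi^*$ is then the Leray--Hirsch theorem applied to $\pi$, whose fiber is $\mathrm{Flag}(\RR^k)\times\mathrm{Flag}(\RR^{n-k})$.

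The core content and main obstacle is (3). I would proceed by induction on $n$ using the fibration $\fln\to\RR P^{n-1}$ from (1). The ``if'' direction comes out cleanly: Leray--Hirsch writes the top class of $\fln$ as the product of the top class $e_1^{n-1}$ of $\RR P^{n-1}$ with a top class of the fiber, which by induction equals the staircase $e_2^{n-2}e_3^{n-3}\cdots e_{n-1}^1$, so the full staircase $e_1^{n-1}e_2^{n-2}\cdots e_{n-1}^1 e_n^0$ is non-zero; since the presentation in (1) is symmetric under $S_n$-permutations of the $e_i$, every permutation monomial is also non-zero. The ``only if'' direction is the delicate step: one must show every non-permutation monomial of top degree vanishes. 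The natural approach is to observe that expanding $\prod(1+e_i)=1$ degree by degree gives the relations $\sigma_1(e)=\sigma_2(e)=\cdots=\sigma_n(e)=0$ on the elementary symmetric polynomials, identifying $H^*(\fln;\ZZ_2)$ with Borel's coinvariant algebra of $S_n$, whose top component is one-dimensional; any non-permutation monomial in top degree can then be reduced to zero by successive application of these symmetric-function relations combined with $e_i^n=0$ from (2).

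Finally, (5) follows by combining Poincaré duality on $\gkn$ with (3) and (4). A direct calculation gives $\pi^*(w_k^{n-k})=(e_1e_2\cdots e_k)^{n-k}$, and multiplying this with the distinguished monomial $(e_1^{k-1}e_2^{k-2}\cdots e_{k-1})(e_{k+1}^{n-k-1}\cdots e_{n-1})$ produces precisely the staircase $e_1^{n-1}e_2^{n-2}\cdots e_{n-1}^1 e_n^0$, which by (3) is the top class of $\fln$. Since $w_k^{n-k}$ is the $\ZZ_2$-fundamental class of $\gkn$, Poincaré duality gives $x\neq 0$ iff there exists $y\in H^*(\gkn;\ZZ_2)$ with $xy=w_k^{n-k}$; applying $\pi^*$ and multiplying by the distinguished monomial then turns this equivalence into the statement of (5).
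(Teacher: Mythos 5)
The paper does not actually prove this proposition --- it is quoted verbatim from Stong \cite[pp.~104--106]{Stong} --- so the only meaningful comparison is with the standard argument in the literature, which is essentially the route you take: the iterated projective-bundle structure and Leray--Hirsch for (1) and for the injectivity in (4), the Whitney formula for the explicit description of $\pi^*$ and for (2), and Poincar\'e duality on $\gkn$ combined with the identity $\pi^*(w_k^{n-k})\cdot(e_1^{k-1}\cdots e_{k-1})(e_{k+1}^{n-k-1}\cdots e_{n-1})=e_1^{n-1}e_2^{n-2}\cdots e_{n-1}$ for (5). Those parts check out, as does the ``if'' half of (3): the staircase monomial is non-zero by induction up the tower, and the $S_n$-symmetry of the presentation carries this to every permutation monomial.

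The one genuine gap is the ``only if'' half of (3), which you rightly call the delicate step but then dispatch with ``can be reduced to zero by successive application of these symmetric-function relations''. That is an assertion, not an argument: one-dimensionality of the top component only says that a top-degree monomial is $0$ or the generator, and you give no procedure for the claimed reduction (try it by hand on $e_1^3e_2^3$ in $\mathrm{Flag}(\RR^4)$ to see that it is not a mechanical matter). What is actually needed, beyond $e_i^n=0$, is the lemma: \emph{a top-degree monomial $e_1^{a_1}\cdots e_n^{a_n}$ with $a_i=a_j$ for some $i\ne j$ vanishes}. Granting this, exponents that are pairwise distinct, all at most $n-1$, and summing to ${n\choose 2}$ must form a permutation of $\{0,\ldots,n-1\}$, which is exactly statement (3). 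A clean proof of the lemma: let $r\colon\fln\to F$ be the bundle forgetting the splitting of $V_i\oplus V_j$ into two lines; then $F$ is a closed manifold of dimension ${n\choose 2}-1$, and both $e_ie_j=w_2(L_i\oplus L_j)$ and the classes $e_l$ for $l\ne i,j$ are pulled back along $r$, so a monomial with $a_i=a_j$ equals $r^*$ of a class of degree ${n\choose 2}$ on $F$, which vanishes for dimension reasons. (Stong's own route instead computes the Gysin maps of the iterated sphere bundles explicitly.) With that lemma inserted, your proofs of (3) and hence of (5) are complete; everything else in the proposal is sound.
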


Based on these properties Stong \cite[p. 103-104]{Stong} proved a series of results about
non-triviality of products in $H^*(\gkn;\ZZ_2)$. He used them to give what are still the best known 
estimates of the LS-category of real Grassmannians. For the convenience of the reader we first summarize 
Stong's results and then derive further non-trivial
cohomology products that will lead to new and much stronger estimates of topological complexity.

\begin{proposition}(Stong, \cite[p.~103]{Stong})
\label{prop:height w1}
If $k\ge 2$ and $2^s< n\le 2^{s+1}$ then the \emph{height} of $w_1$ (i.e., the maximal $m$, such that
$w_1^m\ne 0$ in $ H^*(\gkn;\ZZ_2)$) is 
$$\mathrm{height}(w_1)=\left\{\begin{array}{ll}
2^{s+1}-2 & k=2 \mathrm{\ or \ } k=3, n=2^s+1\\
2^{s+1}-1 & \mathrm{otherwise}.
\end{array}\right.$$
\end{proposition}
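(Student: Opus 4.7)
The plan is to leverage the injective pullback $\pi^*\colon H^*(\gkn;\ZZ_2) \to H^*(\fln;\ZZ_2)$ of Proposition \ref{prop:prop pi*}(4), under which $\pi^*(w_1) = \xi := e_1 + \cdots + e_k$. Since $\pi^*$ is injective, the height of $w_1$ equals the largest $m$ for which $\xi^m$ is nonzero in $H^*(\fln;\ZZ_2)$. For the upper bound, Frobenius over $\ZZ_2$ gives $\xi^{2^{s+1}} = e_1^{2^{s+1}} + \cdots + e_k^{2^{s+1}}$; since $n \le 2^{s+1}$ and $e_i^n = 0$ by Proposition \ref{prop:prop pi*}(2), each summand vanishes, so $\mathrm{height}(w_1) \le 2^{s+1} - 1$.

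For the generic lower bound (the case $k \ge 3$ with $(k,n) \ne (3, 2^s + 1)$), I would pair $w_1^{2^{s+1}-1}$ with a suitable partner $y \in H^{k(n-k)-(2^{s+1}-1)}(\gkn;\ZZ_2)$, chosen as a monomial in $w_2, \ldots, w_k$. Combining Proposition \ref{prop:prop pi*}(5) with Poincar\'e duality on $\gkn$, proving $w_1^{2^{s+1}-1} \ne 0$ reduces to showing that $\pi^*(w_1^{2^{s+1}-1}\,y) \cdot \omega$, with $\omega := e_1^{k-1}e_2^{k-2}\cdots e_{k-1}\cdot e_{k+1}^{n-k-1}\cdots e_{n-1}$, contains an odd number of monomials whose exponent vectors on $e_1, \ldots, e_n$ are permutations of $(0, 1, \ldots, n-1)$; these are precisely the nonzero top-dimensional monomials in $H^*(\fln;\ZZ_2)$ by Proposition \ref{prop:prop pi*}(3). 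Lucas' theorem governs the parities of the multinomial coefficients that appear when expanding $\xi^{2^{s+1}-1}$.

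For the special cases, the height equals $2^{s+1} - 2$, established in two steps. First, to show $w_1^{2^{s+1}-1} = 0$ when $k = 2$, I would use the identity $\ow_{2^{s+1}-1} = w_1^{2^{s+1}-1}$ in $\ZZ_2[w_1, w_2]$. This follows by noting that $\binom{2^{s+1}-1}{j} \equiv 1 \pmod 2$ for all $j$, applied to the generating series $\sum_d \ow_d\, t^d = 1/\bigl((1+at)(1+bt)\bigr)$ in which $w_1 = a+b$ and $w_2 = ab$, so that $\ow_d = h_d(a,b)$ and $w_1^d = \sum_j \binom{d}{j} a^j b^{d-j}$ coincide when $d = 2^{s+1}-1$. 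The recursion $\ow_d = w_1\ow_{d-1} + w_2\ow_{d-2}$ then shows $\ow_d$ lies in the defining ideal $(\ow_{n-1}, \ow_n)$ of $H^*(G_2(\RR^n);\ZZ_2)$ for all $d \ge n-1$; since $2^{s+1}-1 \ge n-1$, we conclude $w_1^{2^{s+1}-1} = 0$. An analogous but more intricate argument with $\ow_d = h_d(a,b,c)$ handles $k = 3$, $n = 2^s + 1$. Second, to show $w_1^{2^{s+1}-2} \ne 0$, I would pair with $y = w_2^{n-1-2^s}$ (for $k=2$) or an analogous partner, and verify by direct expansion that exactly one permutation monomial survives in $\pi^*(w_1^{2^{s+1}-2}\,y) \cdot \omega$.

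The main obstacle is the combinatorial bookkeeping in the lower-bound arguments: among many multinomial contributions, one must control which reduce to valid permutation monomials and show that the count modulo 2 is odd in the generic case, yet even at exponent $2^{s+1}-1$ in the special cases. The boundary constraints $k = 2$ and $(k,n) = (3, 2^s+1)$ are precisely those for which too few variables are available to distribute the $s+1$ ones of $2^{s+1}-1$ without triggering pairwise cancellations, thereby forcing the height to drop by one.
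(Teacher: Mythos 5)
The paper offers no proof of this proposition: it is quoted directly from Stong \cite[p.~103]{Stong}, so there is nothing internal to compare against and your attempt must stand on its own. The parts you actually execute are correct. The upper bound $\mathrm{height}(w_1)\le 2^{s+1}-1$ via $\pi^*(w_1)^{2^{s+1}}=e_1^{2^{s+1}}+\cdots+e_k^{2^{s+1}}=0$ is fine, and your $k=2$ vanishing argument is clean and valid: since every $\binom{2^{s+1}-1}{j}$ is odd, $w_1^{2^{s+1}-1}=(a+b)^{2^{s+1}-1}=h_{2^{s+1}-1}(a,b)=\ow_{2^{s+1}-1}$ in $\ZZ_2[w_1,w_2]$, and the recursion $\ow_d=w_1\ow_{d-1}+w_2\ow_{d-2}$ places $\ow_d$ in the ideal $(\ow_{n-1},\ow_n)$ for all $d\ge n-1$, while $2^{s+1}-1\ge n-1$ because $n\le 2^{s+1}$.

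The two substantive halves, however, are either deferred or broken. First, the generic lower bound $w_1^{2^{s+1}-1}\ne 0$ for $k\ge 3$ (outside the exceptional case) is the actual content of the proposition, and you never exhibit the partner monomial $y$ nor perform the mod-2 count of permutation monomials; you explicitly flag this bookkeeping as the main obstacle, which means the proof is not there. Second, the vanishing $w_1^{2^{s+1}-1}=0$ for $k=3$, $n=2^s+1$ cannot be obtained by the ``analogous'' route you indicate, because the identity underlying the $k=2$ case fails in three variables: the multinomial coefficient $\binom{2^{s+1}-1}{i,\,j,\,l}$ is odd only when the binary expansions of $i,j,l$ partition the bits of $2^{s+1}-1$, so $(a+b+c)^{2^{s+1}-1}\ne h_{2^{s+1}-1}(a,b,c)$ over $\ZZ_2$ --- already $(a+b+c)^3=h_3(a,b,c)+abc \pmod 2$. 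Hence for $k=3$ the power $w_1^{2^{s+1}-1}$ is not the complete homogeneous class $\ow_{2^{s+1}-1}$ and does not obviously lie in the defining ideal; this case needs a genuinely different argument (for instance a direct application of Proposition \ref{prop:prop pi*}(5) showing that every top-degree product containing $w_1^{2^{s+1}-1}$ dies in $G_3(\RR^{2^s+1})$, or simply Stong's own computation). Likewise the non-vanishing $w_1^{2^{s+1}-2}\ne 0$ in the exceptional cases is only asserted, not verified. As written, the proposal establishes the upper bound and the $k=2$ vanishing, and leaves the rest as a program.
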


In particular, since $H^*(G_2(\RR^n);\ZZ_2)$ is generated by the Stiefel-Whitney classes $w_1$ and $w_2$,
the above implies that 
$$w_1^{2^{s+1}-2}\cdot w_2^{n-2^s-1}\in H^{2(n-2)}(G_2(\RR^n);\ZZ_2)$$
is a non-trivial cup product of maximal length in $H^*(G_2(\RR^n);\ZZ_2)$ (cf.~note after Proposition
\ref{prop:g2n}).

\begin{proposition}(Stong, \cite[p.~104]{Stong}) 
\label{prop:products in g3n}
Let $2^s< n\le 2^{s+1}$. Then the following classes in $H^{3(n-3)}(G_3(\RR^n);\ZZ_2)$ ($1\le p$, $0<t<2^{p-1}$
are non-trivial products of maximal length in $H^*(G_3(\RR^n);\ZZ_2)$:
$$\begin{array}{ll}
\mathrm{if\ } n=2^{s+1}-2^p+1, & \mathrm{then\ \ } w_1^{2^{s+1}-2} \cdot w_2^{2^{s+1}-3\cdot 2^{p-1}-2}\ne 0; \\
\mathrm{if\ } n=2^{s+1}-2^p+1+t, & \mathrm{then\ \ } w_1^{2^{s+1}-1} \cdot w_2^{2^{s+1}-3\cdot 2^{p-1}-1}\cdot w_3^{t-1}\ne 0; \\
\mathrm{if\ } n=2^{s+1}, & \mathrm{then\ \ } w_1^{2^{s+1}-1} \cdot w_2^{2^{s+1}-4}\ne 0.
\end{array}$$
\end{proposition}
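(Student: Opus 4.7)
The plan is to invoke Proposition \ref{prop:prop pi*}(5): for each $x = w_1^a w_2^b w_3^c$ listed in the statement it suffices to show that $\pi^*(x) \cdot (e_1^2 e_2) \cdot (e_4^{n-4} e_5^{n-5} \cdots e_{n-1})$ is nonzero in $H^{\binom{n}{2}}(\fln; \ZZ_2)$. By Proposition \ref{prop:prop pi*}(4), $\pi^*(x) = g_1^a g_2^b g_3^c$, where $g_i$ denotes the $i$-th elementary symmetric polynomial in $e_1, e_2, e_3$, so $\pi^*(x)$ lies entirely in $\ZZ_2[e_1, e_2, e_3]$. The padding factors carry the exponent pattern $(2, 1, 0, n-4, n-5, \ldots, 1, 0)$ on $(e_1, \ldots, e_n)$ and consume each value in $\{0, 1, \ldots, n-4\}$ exactly once on the variables $e_4, \ldots, e_n$. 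Hence, by Proposition \ref{prop:prop pi*}(3), a monomial $e_1^{\alpha_1} e_2^{\alpha_2} e_3^{\alpha_3}$ of $\pi^*(x)$ contributes a nonzero summand to the top cohomology class exactly when $(\alpha_1 + 2, \alpha_2 + 1, \alpha_3)$ is a permutation of $(n-3, n-2, n-1)$.

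Enumerating the six permutations of $(n-3, n-2, n-1)$ and subtracting $(2, 1, 0)$ yields six triples $(\alpha_1, \alpha_2, \alpha_3)$ falling into five distinct multisets: $\{n-3, n-3, n-3\}$, $\{n-4, n-4, n-1\}$, $\{n-5, n-2, n-2\}$, $\{n-5, n-3, n-1\}$ (each from one permutation) and $\{n-4, n-3, n-2\}$ (from two). By the symmetry of $g_1^a g_2^b g_3^c$ in $e_1, e_2, e_3$, the coefficient of a monomial depends only on its exponent multiset, so the two contributions from $\{n-4, n-3, n-2\}$ cancel modulo $2$, and the total parity of surviving monomials equals the sum of the coefficients in $g_1^a g_2^b g_3^c$ of representative monomials for the remaining four multisets. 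Factoring $g_3^c = (e_1 e_2 e_3)^c$, each such coefficient reduces to one in $g_1^a g_2^b$ that the multinomial theorem expresses as $\sum \binom{a}{i,j,k} \binom{b}{p,q,r}$ over index triples with prescribed marginals.

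The remaining task is to verify that this sum of four coefficients is odd for each of the three prescribed choices of $(a, b, c; n)$. This is where Lucas's theorem enters: $a$ has the shape $2^{s+1} -$ (small) and $b$ the shape $2^{s+1} - 3 \cdot 2^{p-1} -$ (small), so their binary expansions are nearly solid strings of ones and the non-vanishing of $\binom{a}{i,j,k}\binom{b}{p,q,r} \bmod 2$ is extremely restrictive; direct case-by-case inspection should single out a sparse family of admissible index configurations for each of the four coefficients and confirm that the total parity is $1$. The main obstacle is precisely this combinatorial bookkeeping: tracking the 2-adic digits of $n-1$, $a$, $b$, and $c$, and verifying that they align with the target exponents $n-3, n-2, n-1$ on $(e_1, e_2, e_3)$ in exactly the right way to make the parity sum odd in each of the three cases.
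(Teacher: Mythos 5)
The paper does not prove this proposition at all: it is quoted from Stong with a citation (the paper even adds a remark correcting a typo in Stong's statement), so there is no in-paper proof to compare against. Your framework is, however, exactly the method the paper uses for its own new computations in Propositions \ref{prop:g2n}--\ref{prop:g4n}: reduce via Proposition \ref{prop:prop pi*}(5) to a top-dimensional class of $\fln$, note that the padding factor already occupies the exponents $0,1,\dots,n-4$ on $e_4,\dots,e_n$ so that a surviving monomial must place a permutation of $(n-3,n-2,n-1)$ on $(e_1,e_2,e_3)$, use the symmetry of $\pi^*(x)$ to cancel the two contributions from the multiset $\{n-4,n-3,n-2\}$, and reduce everything to the parity of four coefficients of $g_1^a g_2^b g_3^c$. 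All of this set-up is correct (the degree count matches $\binom{n}{2}$, and the enumeration of the six shifted permutations into five multisets is right); I checked that it produces the correct answer in small instances, e.g.\ for $n=7$, $w_1^6w_2^3$ the four coefficients come out $0,0,1,0$.

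The proposal is nonetheless incomplete, because the entire content of the proposition sits in the step you defer. You must show, uniformly in $s$, $p$ and $t$, that the sum of the four coefficients is odd for each of the three families $(a,b,c;n)$, and you only assert that Lucas's theorem ``should'' isolate a sparse set of admissible index configurations with total parity $1$. You exhibit no such configurations and carry out no count. The bookkeeping is genuinely delicate: the exponents $2^{s+1}-3\cdot 2^{p-1}-1$ and $2^{s+1}-3\cdot2^{p-1}-2$ are not solid strings of ones in binary (the subtraction of $3\cdot 2^{p-1}$ creates a $0$ digit whose position interacts with $t$), and in the cases I tested three of the four coefficients vanish while a single one survives --- and which one survives changes from case to case, so no uniform one-line argument is available. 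Until that computation is actually performed, you have a correct reduction, not a proof. A secondary gap: the proposition asserts these are non-trivial products \emph{of maximal length} (i.e.\ they realize the cup-length of $G_3(\RR^n)$), and your argument addresses only non-vanishing, saying nothing about why no longer product of positive-degree classes can be non-zero.
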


(Note that there is a small typo in case (2) of Proposition \cite[p.~104]{Stong}, where it is stated 
that the longest non-trivial product is $w_1^{2^{s+1}-1} \cdot w_2^{2^{s+1}-3\cdot 2^{p-1}}\cdot w_3^{t-1}$. 
The value for the cup-length in the same Proposition is correct, and the 
correct non-trivial product can be found in \cite[p.~112]{Stong}.)

\begin{proposition}(Stong, \cite[p.~104]{Stong})
\label{prop:products in g4n}
Let $2^s< n\le 2^{s+1}$. Then the following classes in $H^{4(n-4)}(G_4(\RR^n);\ZZ_2)$ 
(\,$0\le r<s$, $0\le t<2^r$) are non-trivial products of maximal length in $H^*(G_4(\RR^n);\ZZ_2)$:
$$\begin{array}{ll}
\mathrm{if\ } n=2^s+1, & \mathrm{then\ \ } w_1^{2^{s+1}-2} \cdot w_2^{2^s-5}\ne 0, \ 
w_1^{2^{s+1}-1} \cdot w_2^{2^s-7}\cdot w_3\ne 0;\\
\mathrm{if\ } n=2^s+2^r+1+t, & \mathrm{then\ \ } w_1^{2^{s+1}-2} \cdot w_2^{2^s+2^{r+1}-5}\cdot
 w_4^t\ne 0,\\

 & \mathrm{also\ } w_1^{2^{s+1}-1} \cdot w_2^{2^s+2^{r+1}-7}\cdot w_3\cdot w_4^t\ne 0 \mathrm{\ if\
  } r>0 ;
\end{array}$$
\end{proposition}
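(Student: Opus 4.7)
The strategy is to detect non-triviality through the injective pullback $\pi^*$ of Proposition \ref{prop:prop pi*}. For any of the claimed products $P$ in $H^{4(n-4)}(G_4(\RR^n);\ZZ_2)$, criterion (5) of that proposition reduces the problem to showing that
\[
\pi^*(P)\cdot e_1^3e_2^2e_3\cdot e_5^{n-5}e_6^{n-6}\cdots e_{n-1}
\]
is non-zero in $H^{\binom{n}{2}}(\fln;\ZZ_2)$. By items (2) and (3) of the same proposition, a class in this top dimension is detected by the parity of the number of monomials in its expansion whose exponent vector, after reducing via $e_i^n=0$, is a permutation of $(0,1,\ldots,n-1)$. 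Matching exponents forces any surviving permutation monomial to assign $n-5,\ldots,1$ to $e_5,\ldots,e_{n-1}$, exponent $0$ to $e_n$, and a permutation of $(n-4,n-3,n-2,n-1)$ to $(e_1,e_2,e_3,e_4)$.

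Thus the plan is to expand $\pi^*(P)$ via $\pi^*(w_i)=\sigma_i(e_1,\ldots,e_4)$ (the $i$-th elementary symmetric polynomial) and to count, modulo $2$, how many monomials $e_1^{a_1}e_2^{a_2}e_3^{a_3}e_4^{a_4}$ of the expansion satisfy that $(a_1+3,a_2+2,a_3+1,a_4)$ is a permutation of $(n-4,n-3,n-2,n-1)$. The Frobenius identity $\pi^*(w_1)^{2^m}=e_1^{2^m}+e_2^{2^m}+e_3^{2^m}+e_4^{2^m}$, valid modulo $2$, and the factorisation $\pi^*(w_4)^t=(e_1e_2e_3e_4)^t$, which uniformly raises each of the four exponents by $t$, simplify the structure of the expansion; the remaining contributions from $\pi^*(w_2)^b$ and, where present, $\pi^*(w_3)=e_2e_3e_4+e_1e_3e_4+e_1e_2e_4+e_1e_2e_3$ expand into multinomial terms whose parities are governed by Lucas' theorem on binomial coefficients modulo $2$.

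The main obstacle is the case-by-case combinatorial verification. Each of the four statements corresponds to a different sub-range of $n$ (namely $n=2^s+1$, or $n=2^s+2^r+1+t$ with $t=0$ or $t>0$, and $r=0$ versus $r>0$), and in each sub-range one has to check that the binary expansions of the exponents of $w_1,\ldots,w_4$ interact with the target exponents $(n-4,n-3,n-2,n-1)$ in the carry-free way needed for Lucas' theorem to yield an odd count. Because this binary-digit bookkeeping is genuinely different in each sub-case, I would follow Stong's organisation in \cite[pp.~103-104]{Stong} and handle the four cases separately, rather than attempt a unified argument.
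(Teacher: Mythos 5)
Your reduction is sound and is exactly the mechanism used both in Stong's paper and throughout Section~3 here: since each claimed product lies in the top degree $4(n-4)$, Proposition \ref{prop:prop pi*}(5) converts non-triviality into the statement that $\pi^*(P)\cdot e_1^3e_2^2e_3\cdot e_5^{n-5}\cdots e_{n-1}$ hits the generator of $H^{n\choose 2}(\fln;\ZZ_2)$, and by parts (2) and (3) of that proposition this amounts to a mod-2 count of monomials whose $(e_1,e_2,e_3,e_4)$-exponents, shifted by $(3,2,1,0)$, form a permutation of $\{n-4,n-3,n-2,n-1\}$. (Note that the paper itself gives no proof of this proposition: it is quoted from Stong, and the argument you sketch is Stong's own.)

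However, as written your proposal is a proof plan rather than a proof. The entire mathematical content of the proposition is the assertion that in each of the four cases the relevant count is odd, and this is precisely the step you defer (``the main obstacle is the case-by-case combinatorial verification\dots I would follow Stong's organisation''). Nothing in the setup forces the count to be odd; for nearby exponent choices it is even --- this is exactly why, for instance, $w_1^{2^s}\cdot w_2^{n-2^s}$ vanishes in Proposition \ref{prop:g2n} --- so the case analysis is where the theorem actually lives. Until you expand $\pi^*(w_1)^{2^{s+1}-2}$ and $\pi^*(w_1)^{2^{s+1}-1}$ into their dyadic factors, run the Lucas-type carry-free analysis against the targets $n-1,n-2,n-3,n-4$ in each sub-range of $n$, and exhibit an odd number of contributing terms, the non-vanishing claims remain unestablished. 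Separately, you do not address the ``products of maximal length'' clause at all: that part of the statement requires showing that every longer product of positive-degree classes vanishes, i.e.\ an upper bound on the cup-length, which is a different kind of argument from detecting one non-zero product.
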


While the products listed in the above propositions give the best cohomological lower bounds for 
the Lusternik-Schnirelmann 
category of $G_k(\RR^n)$ for $k\le 4$ it turns out that there are other non-trivial products that lead to better estimates 
of topological complexity. We prove the relevant results in the rest of this section. Note that whenever 
we state that certain product of Stiefel-Whitney classes is non-trivial in the cohomology of $\gkn$, 
then the same product is also non-trivial in the cohomology of $G_k(\RR^m)$ for every $m\ge n$ (because the 
corresponding ideal of relations is smaller).

\begin{proposition}
\label{prop:g2n}
If $2^s<n\le 2^{s+1}$, then 
$$w_1^{2^s}\cdot w_2^{n-2^s-1}\ne 0 \text{\ \ in\ \ } H^{2(n-2)-2^s+2}(G_2(\RR^n);\ZZ_2)$$
and 
$$w_1^{2^s}\cdot w_2^{n-2^s}=0 \text{\ \ in\ \ } H^{2(n-2)-2^s+4}(G_2(\RR^n);\ZZ_2).$$
\end{proposition}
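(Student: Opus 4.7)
The plan is to pull both claims back to the flag manifold $\fln$ via the map $\pi$ of Proposition~\ref{prop:prop pi*}. Using $\pi^*(w_1)=e_1+e_2$ and $\pi^*(w_2)=e_1 e_2$ together with the Frobenius identity $(e_1+e_2)^{2^s}=e_1^{2^s}+e_2^{2^s}$ valid in characteristic~$2$, the key computation is
$$\pi^*(w_1^{2^s}\cdot w_2^{m}) = \bigl(e_1^{2^s}+e_2^{2^s}\bigr)(e_1 e_2)^{m},$$
which I would apply with $m=n-2^s$ for the vanishing statement and $m=n-2^s-1$ for the non-vanishing one.

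For $m=n-2^s$ the result is $e_1^n e_2^{n-2^s}+e_1^{n-2^s}e_2^n$; both summands vanish by part~(2) of Proposition~\ref{prop:prop pi*}, so by injectivity of $\pi^*$ (part~(4)) one concludes $w_1^{2^s}w_2^{n-2^s}=0$.

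For $m=n-2^s-1$ one obtains
$$\pi^*(w_1^{2^s} w_2^{n-2^s-1}) = e_1^{n-1}e_2^{n-2^s-1}+e_1^{n-2^s-1}e_2^{n-1},$$
which is not a top-dimensional class in $\fln$ when $s\ge 2$, so Proposition~\ref{prop:prop pi*}\,(3) does not apply directly. The natural fix is to multiply by the auxiliary class $e_1^{2^s-1}\cdot e_3^{n-3}\,e_4^{n-4}\cdots e_{n-1}$, which raises the total degree to $\binom{n}{2}$. The first summand then acquires a factor $e_1^{n+2^s-2}$ and dies by part~(2); the second summand collapses to
$$e_1^{n-2}\,e_2^{n-1}\,e_3^{n-3}\,e_4^{n-4}\cdots e_{n-1}^{1},$$
whose exponent sequence $(n-2,\,n-1,\,n-3,\,n-4,\,\ldots,\,1,\,0)$ is a permutation of $\{0,1,\ldots,n-1\}$. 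By part~(3) this top monomial is non-zero in $H^{\binom{n}{2}}(\fln;\ZZ_2)$, so $\pi^*(w_1^{2^s}w_2^{n-2^s-1})\ne 0$, and injectivity of $\pi^*$ finishes the argument.

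The only non-routine step is guessing the auxiliary multiplier in the non-vanishing half: it must simultaneously raise the \emph{bad} exponent $n-2^s$ on $e_1$ to the unique missing value $n-2$, while pushing the other summand's $e_1$-exponent past $n$ so that it is killed by part~(2). Once this multiplier is in hand, the remaining bookkeeping reduces to reading off a permutation from the exponents, which is immediate.
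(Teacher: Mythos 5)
Your proof is correct and follows essentially the same route as the paper: both pull back to $\fln$, kill $w_1^{2^s}w_2^{n-2^s}$ via $e_i^n=0$, and certify $w_1^{2^s}w_2^{n-2^s-1}\ne 0$ by multiplying up to the top-dimensional monomial $e_1^{n-2}e_2^{n-1}e_3^{n-3}\cdots e_{n-1}$ (your auxiliary multiplier $e_1^{2^s-1}e_3^{n-3}\cdots e_{n-1}$ is exactly the paper's factor $e_1\cdot e_3^{n-3}\cdots e_{n-1}$ from Proposition~\ref{prop:prop pi*}(5) combined with its subsequent multiplication by $e_1^{2^s-2}$). The only cosmetic difference is that you invoke injectivity of $\pi^*$ directly rather than the criterion of Proposition~\ref{prop:prop pi*}(5); the computation is identical.
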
  
\begin{proof}
By Proposition \ref{prop:prop pi*}(5), in order to check the first claim  we must show that 
$$\pi^*(w_1^{2^s}\cdot w_2^{n-2^s-1})\cdot e_1\cdot e_3^{n-3}\cdots e_{n-1}=
(e_1^{2^s}+e_2^{2^s})(e_1\cdot e_2)^{n-2^s-1}\cdot e_1\cdot e_3^{n-3}\cdots e_{n-1}=$$
$$=e_1^{n-2^s}\cdot e_2^{n-1}\cdot e_3^{n-3}\cdots e_{n-1}\ne 0
\text{\ \ in\ \ } H^{2(n-2)-2^s+2}(\fln;\ZZ_2).$$
By multiplying the result by $e_1^{2^s-2}$ we get $e_1^{n-2}\cdot e_2^{n-1}\cdot e_3^{n-3}\cdots e_{n-1}$, 
which is non-trivial by Proposition \ref{prop:prop pi*}(3).

Similarly, for the second claim we compute 
$$\pi^*(w_1^{2^s}\cdot w_2^{n-2^s})\cdot e_1\cdot e_3^{n-3}\cdots e_{n-1}=
(e_1^{2^s}+e_2^{2^s})(e_1\cdot e_2)^{n-2^s}\cdot e_1\cdot e_3^{n-3}\cdots e_{n-1}=0,$$
because $e_1^n=e_2^n=0$ by Proposition \ref{prop:prop pi*}(2).
\end{proof}

Note that a similar computation shows that $w_1^{2^{s+1}-2}\cdot w_2^{n-2^s-1}\ne 0$ in $
H^{2(n-2)}(G_2(\RR^n);\ZZ_2)$, and that the second claim implies that the exponent of $w_1$ in every other 
non-trivial product in $H^{2(n-2)}(G_2(\RR^n);\ZZ_2)$ must be smaller than $2^s$.

Next we turn to cohomology of $G_3(\RR^n)$ in the range $2^s<n\le 2^{s+1}$.

\begin{proposition}
\label{prop:g3n}
\ \\[-6mm]
\begin{enumerate}
\item 
$w_1^{2^s}\cdot w_2^{2^{s-1}}\ne 0 \text{\ \ and \ \ } w_1^{2^s}\cdot w_2^{2^{s-1}}\cdot w_3=0
\text{\ \ in \ \ }  H^*(G_3(\RR^{2^s+1});\ZZ_2);$
\\
\item 
$w_1^{2^s}\cdot w_2^{2^{s-1}}\cdot w_3\ne 0 \text{\ \ and \ \ } w_1^{2^s}\cdot w_2^{2^{s-1}}\cdot w_3^2=0
\text{\ \ in \ \ }  H^*(G_3(\RR^{2^s+2});\ZZ_2);$
\\
\item 
if $3\le t\le 2^s$, then $w_1^{2^s}\cdot w_2^{2^s}\cdot w_3^{t-3}\ne 0 \text{\ \ in \ \ }  H^{3(2^s+t-3)}(G_3(\RR^{2^s+t});\ZZ_2)$.
\end{enumerate}
\end{proposition}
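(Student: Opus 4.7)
The plan is to verify each assertion by checking non-triviality (or vanishing) of $\pi^*(x)\cdot\xi$ in $H^*(\fln;\ZZ_2)$, where $\xi = e_1^2 e_2 \cdot e_4^{n-4} e_5^{n-5}\cdots e_{n-1}$ is the auxiliary class from Proposition~\ref{prop:prop pi*}(5). First, using the Frobenius identity over $\ZZ_2$, I will expand
\[
\pi^*(w_1^{2^r}) = e_1^{2^r}+e_2^{2^r}+e_3^{2^r}, \qquad \pi^*(w_2^{2^r}) = (e_1 e_2)^{2^r}+(e_1 e_3)^{2^r}+(e_2 e_3)^{2^r},
\]
together with $\pi^*(w_3^j)=(e_1 e_2 e_3)^j$. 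Multiplying these expansions out and then multiplying by $\xi$ produces a sum of at most nine monomials in the $e_i$, which I will reduce using the relations $e_i^n=0$ from Proposition~\ref{prop:prop pi*}(2): every monomial containing some $e_i$ with exponent $\ge n$ vanishes.

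For the vanishing statements in (1) and (2), a direct exponent count shows that every one of the nine monomials has some $e_i$ raised to a power $\ge n$, so $\pi^*(x)\cdot\xi = 0$ and thus $x = 0$. For the non-vanishing statement in (3), the same count leaves exactly one monomial, which simplifies to the top class $e_1^{n-1} e_2^{n-2}\cdots e_{n-1}$ of $\fln$; this is non-zero by Proposition~\ref{prop:prop pi*}(3), so $x \ne 0$. This case is essentially immediate.

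The delicate case is the non-vanishing statements in (1) and (2). After killing eight of the nine terms, the unique surviving monomial $M$ sits strictly below the top dimension of $\fln$, so Proposition~\ref{prop:prop pi*}(3) cannot be applied directly to $M$. My remedy is to multiply $M$ by a carefully chosen auxiliary monomial $N$ of the missing degree $2^s - 6$, so that $MN$ lies in the top dimension and its exponents form a permutation of $\{0,1,\ldots,n-1\}$; Proposition~\ref{prop:prop pi*}(3) then yields $MN \ne 0$, whence $M \ne 0$. In part (1) the right choice is $N = e_1^{2^{s-1}-4}\, e_2^{2^{s-1}-2}$, which shifts the ``extra'' exponents $2^{s-1}+2, 2^{s-1}+1$ on $e_1, e_2$ onto the missing slots $2^s-2, 2^s-1$; in part (2) the symmetric choice $N = e_1^{2^{s-1}-3}\, e_2^{2^{s-1}-3}$ does the analogous job. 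These formulas require $s \ge 3$; for $s \le 2$ the claimed class has degree exceeding $\dim G_3(\RR^{2^s+1})$, so the assertion is either trivial or falls outside the intended range. The main obstacle is the combinatorial bookkeeping: verifying that only one of the nine monomials survives the reduction, and then identifying the unique shift $N$ that converts the exponent profile of $M$ into a permutation of $\{0,\ldots,n-1\}$.
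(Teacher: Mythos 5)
Your proof is correct and follows essentially the same route as the paper's: Stong's criterion (Proposition~\ref{prop:prop pi*}(5)), the Frobenius expansion of $\pi^*(w_1^{2^r})$, $\pi^*(w_2^{2^r})$, elimination of monomials via $e_i^n=0$, and completion of the surviving monomial to the top degree so that Proposition~\ref{prop:prop pi*}(3) applies. Two small remarks: in case (3) the reduction actually leaves three of the nine terms (each equal to $e_1^{2^s}e_2^{2^s}e_3^{2^s}$ times the rest), not one, but since they coincide and $3\equiv 1\pmod 2$ the conclusion is unaffected; and your degree-$(2^s-6)$ multipliers $N$ in cases (1) and (2) are exactly right, yielding the permutation monomials $e_1^{2^s-2}e_2^{2^s-1}e_3^{2^s}e_4^{2^s-3}\cdots e_{2^s}$ and $e_1^{2^s}e_2^{2^s-1}e_3^{2^s+1}e_4^{2^s-2}\cdots e_{2^s+1}$, whereas the multiplier $e_1\cdot e_2$ printed in the paper closes the degree gap only when $s=3$, so your version is the one that works for all $s\ge 3$.
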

\begin{proof}
Toward the proof of (1), by Proposition \ref{prop:prop pi*}(5), we must show that the product  
$$\pi^*(w_1^{2^s}\cdot w_2^{2^{s-1}})\cdot e_1^2\cdot e_2\cdot e_4^{2^s-3}\cdots e_{2^s}\ne 0.$$
To obtain a top-dimensional class we multiply the above product by $e_1\cdot e_2$ and take into account that
$e_1^{2^s+1}= e_2^{2^s+1}=0$ to obtain
$$e_1\cdot e_2\cdot\pi^*(w_1^{2^s}\cdot w_2^{2^{s-1}})\cdot e_1^2\cdot e_2\cdot e_4^{2^s-3}\cdots e_{2^s}=$$
$$=e_1\cdot e_2\cdot (e_1^{2^s}+e_2^{2^s}+e_3^{2^s})\cdot \big((e_1\cdot e_2)^{2^{s-1}}+(e_1\cdot e_3)^{2^{s-1}}+
(e_2\cdot e_3)^{2^{s-1}}\big)\cdot e_1^2\cdot e_2\cdot e_4^{2^s-3}\cdots e_{2^s}=$$
$$=e_1^{2^s-2}\cdot e_2^{2s-3}\cdot e_3^{2^s-1}\cdot e_4^{2^s-3}\cdots e_{2^s},$$
which is non-trivial by Proposition \ref{prop:prop pi*}(3), therefore $w_1^{2^s}\cdot w_2^{2^{s-1}}\ne 0$.

On the other hand, it is easy to check that all terms in the expansion of the product 
$$\pi^*(w_1^{2^s}\cdot w_2^{2^{s-1}}\cdot w_3)\cdot e_1^2\cdot e_2\cdot e_4^{2^s-3}\cdots e_{2^s}=$$
$$=(e_1^{2^s}+e_2^{2^s}+e_3^{2^s})\cdot \big((e_1\cdot e_2)^{2^{s-1}}+(e_1\cdot e_3)^{2^{s-1}}+
(e_2\cdot e_3)^{2^{s-1}}\big)\cdot (e_1\cdot e_2\cdot e_3)\cdot e_1^2\cdot e_2\cdot e_4^{2^s-3}\cdots e_{2^s}$$
have at least one factor whose exponent is bigger or equal to $2^s+1$, so the entire expression is trivial
by Proposition \ref{prop:prop pi*}(2).

Case (2) is entirely analogous and is left to the reader.

For case (3) we compute 
$$\pi^*(w_1^{2^s}\cdot w_2^{2^s}\cdot w_3^{t-3})\cdot e_1^2\cdot e_2\cdot e_4^{2^s-3}\cdots e_{2^s}=$$
$$(e_1^{2^s}+e_2^{2^s}+e_3^{2^s})\cdot \big((e_1\cdot e_2)^{2^s}+(e_1\cdot e_3)^{2^s}+
(e_2\cdot e_3)^{2^s}\big)\cdot (e_1\cdot e_2\cdot e_3)^{t-3}\cdot e_1^2\cdot e_2\cdot e_4^{2^s-3}\cdots e_{2^s}=$$
$$=e_1^{2^s+t-1}\cdot e_2^{2^s+t-2}\cdot e_3^{2^s+t-3}\cdot e_4^{2^s-3}\cdots e_{2^s},$$
which is non-trivial by Proposition \ref{prop:prop pi*}(5). Note that $w_1^{2^s}\cdot w_2^{2^s}\cdot w_3^{t-3}$
is a top-dimensional class in the cohomology of $G_3(\RR^{2^s+t})$.
\end{proof}

For $k>3$ the computations become increasingly complicated but we are still able to obtain some
general results. 

\begin{proposition}
\label{prop:w1w2w3}
\ \\[-6mm]
\begin{enumerate}
\item
If $3\le k\le 2^s$, then $w_1^{2^s}\cdot w_2^{2^s}=w_3^{2^s}\ne 0$ in $H^{3\cdot 2^s}(G_k(\RR^{2^s+k});\ZZ_2)$.\\
\item
If $3\le k\le 2^{s-1}$, then  $w_1^{2^s}\cdot w_2^{2^s}\cdot w_3^{2^{s-1}}\ne 0$ in 
$H^{3(2^s+2^{s-1})}(G_k(\RR^{2^s+2^{s-1}+k});\ZZ_2)$.\\
\item
If $3\le k\le 2^s$, then  $w_1^{2^s}\cdot w_2^{2^s}\cdot w_3^{2^s}=0$ in 
$H^{6\cdot 2^s}(G_k(\RR^{2^s+k});\ZZ_2)$.

\end{enumerate}
\end{proposition}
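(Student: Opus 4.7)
\emph{Plan.} All three claims will follow from a Frobenius-type identity in $H^*(\gkn;\ZZ_2)$. By Proposition~\ref{prop:prop pi*}(4) and the Freshman's dream in characteristic~$2$,
$$\pi^*(w_1^{2^s}w_2^{2^s})=\Big(\sum_i e_i^{2^s}\Big)\Big(\sum_{a<b}e_a^{2^s}e_b^{2^s}\Big)=\sigma_3(e_1^{2^s},\ldots,e_k^{2^s})+\sum_{a<b}\bigl(e_a^{2^{s+1}}e_b^{2^s}+e_a^{2^s}e_b^{2^{s+1}}\bigr),$$
while $\pi^*(w_3^{2^s})=\sigma_3(e_1^{2^s},\ldots,e_k^{2^s})$. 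Whenever $n\le 2^{s+1}$, Proposition~\ref{prop:prop pi*}(2) kills every $e_a^{2^{s+1}}$, and the injectivity of $\pi^*$ yields $w_1^{2^s}w_2^{2^s}=w_3^{2^s}$. This applies in~(1), where $n=2^s+k\le 2^{s+1}$ since $k\le 2^s$, and in~(2), where $n=2^s+2^{s-1}+k\le 2^{s+1}$ since $k\le 2^{s-1}$. The three parts are thereby reduced, respectively, to showing $w_3^{2^s}\ne 0$, $w_3^{3\cdot 2^{s-1}}\ne 0$ and $w_3^{2^{s+1}}=0$.

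For~(1), Proposition~\ref{prop:prop pi*}(5) reduces the non-vanishing of $w_3^{2^s}$ to
$$\pi^*(w_3^{2^s})\cdot\bigl(e_1^{k-1}\cdots e_{k-1}\bigr)\bigl(e_{k+1}^{2^s-1}\cdots e_{n-1}\bigr)\ne 0.$$
This product falls $2^s(k-3)$ short of the top degree of $H^*(\mathrm{Flag}(\RR^{n});\ZZ_2)$, so I would multiply once more by $e_4^{2^s}\cdots e_k^{2^s}$ and invoke Proposition~\ref{prop:prop pi*}(3). For the triple $(a,b,c)=(1,2,3)$ the resulting exponents at positions $1,\ldots,n$ read $2^s+k-1,2^s+k-2,\ldots,1,0$, a permutation of $\{0,\ldots,n-1\}$, hence a nonzero top class. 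Every other triple contains some $i\in\{4,\ldots,k\}$, at which the exponent becomes $2^{s+1}+(k-i)\ge n$ (using $k\le 2^s$), so Proposition~\ref{prop:prop pi*}(2) annihilates that summand.

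For~(2) the reduction $w_1^{2^s}w_2^{2^s}w_3^{2^{s-1}}=w_3^{3\cdot 2^{s-1}}$ calls for the same strategy applied to
$$\pi^*(w_3^{3\cdot 2^{s-1}})=\sigma_3(e_1^{2^s},\ldots,e_k^{2^s})\cdot\sigma_3(e_1^{2^{s-1}},\ldots,e_k^{2^{s-1}})=\sum_{T_1,T_2}\prod_{i\in T_1}e_i^{2^s}\prod_{j\in T_2}e_j^{2^{s-1}},$$
the sum running over ordered pairs of $3$-element subsets $T_1,T_2\subseteq\{1,\ldots,k\}$. After multiplying by the monomial from Proposition~\ref{prop:prop pi*}(5) and by $e_4^{3\cdot 2^{s-1}}\cdots e_k^{3\cdot 2^{s-1}}$, I would check case by case that every pair with $T_1\cup T_2\not\subseteq\{1,2,3\}$ sends some position $i\ge 4$ to an exponent $\ge n$. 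The tightest subcase, $i\in T_2\setminus T_1$, yields precisely the inequality $i\le 2^{s-1}$, which is exactly the hypothesis $k\le 2^{s-1}$. The unique survivor $T_1=T_2=\{1,2,3\}$ then produces the full permutation $(3\cdot 2^{s-1}+k-1,\ldots,1,0)$ of $\{0,\ldots,n-1\}$ and is therefore nonzero.

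Part~(3) is now immediate: $w_1^{2^s}w_2^{2^s}w_3^{2^s}=w_3^{2^{s+1}}$ by the first paragraph, and $\pi^*(w_3^{2^{s+1}})=\sigma_3(e_1^{2^{s+1}},\ldots,e_k^{2^{s+1}})=0$ because each $e_i^{2^{s+1}}$ vanishes when $n\le 2^{s+1}$. The main technical obstacle is the case analysis in~(2), where contributions from the two independent $\sigma_3$-factors must be tracked simultaneously at each position; the sharpened hypothesis $k\le 2^{s-1}$ is the arithmetic tool that rules out all off-diagonal pairs $(T_1,T_2)$, and verifying this comfortably occupies the bulk of the proof.
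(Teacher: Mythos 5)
Your proof is correct and follows essentially the same route as the paper: the characteristic-two (Frobenius) identity $\pi^*(w_1^{2^s}w_2^{2^s})=\pi^*(w_3^{2^s})$ forced by $e_i^{2^{s+1}}=0$, followed by multiplying Stong's detecting monomial from Proposition~\ref{prop:prop pi*}(5) by $e_4^{d}\cdots e_k^{d}$ so that only the triple $\{1,2,3\}$ survives the bound $e_i^n=0$, and the same observation for part (3). The only cosmetic differences are that you first rewrite case (2) as a power of $w_3$ and that you spell out the exponent case analysis (including the role of $k\le 2^{s-1}$) which the paper leaves to the reader.
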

\begin{proof}
First note that 
$$\pi^*(w_1^{2^s}\cdot w_2^{2^s})=\left( \sum_{1\le a\le k} e_a^{2^s} \right) \cdot\left(\sum_{1\le b<c\le k} e_b^{2^s}e_c^{2^s}\right)=\left(\sum_{1\le a< b<c\le k} e_a^{2^s} e_b^{2^s}e_c^{2^s}
\right)=\pi^*(w_3^{2^s})$$
because $e_i^{2^{s+1}}=0$ by Proposition 
\ref{prop:prop pi*}(2), therefore all products where $a$ equals $b$ or $c$ disappear. Injectivity of $\pi^*$
implies that $w_1^{2^s}\cdot w_2^{2^s}=w_3^{2^s}$. 

To prove that the product is non-trivial we must show that
$$\pi^*(w_3^{2^s})\cdot (e_1^{k-1}\cdots e_{k-1})\cdot 
(e_{k+1}^{2^s-1}\cdots e_{2^s+k-1})$$
is a non-zero element of $H^*(\mathrm{Flag}(\RR^{2^s+k});\ZZ_2)$. To obtain a top-dimensional 
element we multiply the above product by $e_4^{2^s}\cdots e_k^{2^s}$:
$$\pi^*(w_3^{2^s})\cdot (e_1^{k-1}\cdots e_{k-1})\cdot 
(e_{k+1}^{2^s-1}\cdots e_{2^s+k-1})\cdot (e_4^{2^s}\cdots e_k^{2^s})=$$
$$=\left(\sum_{1\le a< b<c\le k} e_a^{2^s} e_b^{2^s}e_c^{2^s}
\right)\cdot (e_1^{k-1}\cdots e_{k-1})\cdot 
(e_{k+1}^{2^s-1}\cdots e_{2^s+k-1})\cdot (e_4^{2^s}\cdots e_k^{2^s})=$$
$$=e_1^{2^s+k-1}e_2^{2^s+k-2}\cdots e_k^{2^s} e_{k+1}^{2^s-1}\cdots e_{2^s+k-1}\ne 0.$$
Note that multiplication by $e_4^{2^s}\cdots e_k^{2^s}$ annihilates all summands in line two except 
$e_1^{2^s} e_2^{2^s} e_3^{2^s}$.

The computation in case (2) is analogous so we leave the details to the reader: 
$$\pi^*(w_1^{2^s}\cdot w_2^{2^s}\cdot w_3^{2^{s-1}})\cdot (e_1^{k-1}\cdots e_{k-1})\cdot 
(e_{k+1}^{2^s+2^{s-1}-1}\cdots e_{2^s+2^{s-1}+k-1})\cdot (e_4^{2^s+2^{s-1}}\cdots 
e_k^{2^s+2^{s-1}})=$$
$$=e_1^{2^s+2^{s-1}+k-1}e_2^{2^s+2^{s-1}+k-2}\cdots e_k^{2^s+2^{s-1}} 
e_{k+1}^{2^s+2^{s-1}-1}\cdots e_{2^s+2^{s-1}+k-1}\ne 0.$$

To prove (3) note that 
$$\pi^*(w_1^{2^s}\cdot w_2^{2^s}\cdot w_3^{2^s})=\pi^*(w_3^{2^{s+1}})=\left(\sum_{1\le a< b<c\le k} e_a^{2^{s+1}} 
e_b^{2^{s+1}}e_c^{2^{s+1}}\right)=0$$
because $e_i^{2^{s+1}}=0$.
\end{proof}

We conclude with a result that for $G_4(\RR^n)$ improves Proposition \ref{prop:w1w2w3} in a range of 
dimensions.

\begin{proposition}
\label{prop:g4n}
If $3\le t\le 2^{s-1}$, then $w_1^{2^s}\cdot w_2^{2^s}\cdot w_3^{2^{s-1}}\!\!\cdot w_4^{t-3}\ne 0$ in 
$H^*(G_4(\RR^{2^s+2^{s-1}+t});\ZZ_2)$.
\end{proposition}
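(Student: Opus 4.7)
\emph{Proof plan.} The plan is to invoke Proposition \ref{prop:prop pi*}(5). Set $n:=2^s+2^{s-1}+t$ and $m:=2^s+2^{s-1}$; the task then becomes to show that
$$\pi^*(w_1^{2^s}\cdot w_2^{2^s}\cdot w_3^{2^{s-1}}\cdot w_4^{t-3})\cdot(e_1^3 e_2^2 e_3)\cdot(e_5^{n-5}\cdots e_{n-1})$$
is non-zero in $H^*(\fln;\ZZ_2)$. A direct degree count shows that this product lies in dimension $\binom{n}{2}-(m-4)$, so I would multiply further by $e_1^{m-4}$ to reach the top cohomology group and then apply Proposition \ref{prop:prop pi*}(3). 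This is legitimate since the hypothesis $3\le t\le 2^{s-1}$ forces $s\ge 3$ and hence $m=3\cdot 2^{s-1}\ge 12$.

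Next I would expand the pullback using the identity $\pi^*(w_1^{2^s}\cdot w_2^{2^s})=\pi^*(w_3^{2^s})$ established in the proof of Proposition \ref{prop:w1w2w3}, together with the Frobenius expansions $\pi^*(w_3^{2^{s-1}})=\sum_{a<b<c}(e_ae_be_c)^{2^{s-1}}$ and $\pi^*(w_4^{t-3})=(e_1 e_2 e_3 e_4)^{t-3}$, where all sums range over $3$-element subsets of $\{1,2,3,4\}$. This turns the product into a double sum indexed by pairs $(T_1,T_2)$ of such subsets; in each summand the exponent of $e_i$ (for $i\le 4$) is $t-3$, increased by $2^s$ if $i\in T_1$ and by $2^{s-1}$ if $i\in T_2$. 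The witness monomial together with the multiplier $e_1^{m-4}$ then adds the vector $(m-1,2,1,0)$ to the exponents of $(e_1,e_2,e_3,e_4)$.

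The decisive step is to verify that only the pair $T_1=T_2=\{2,3,4\}$ contributes a non-vanishing summand. For every other choice of $(T_1,T_2)$ the index $1$ belongs to $T_1\cup T_2$, and so the exponent of $e_1$ gains an extra $2^{s-1}$ or $2^s$; since the baseline is already $t+m-4$, this pushes the exponent to at least $n$ (thanks to $2^{s-1}\ge 4$), and the summand dies by Proposition \ref{prop:prop pi*}(2). For the surviving pair a direct computation produces exponent vector $(n-4,n-1,n-2,n-3)$ on $(e_1,e_2,e_3,e_4)$, which combined with the witness exponents $(n-5,n-6,\ldots,1,0)$ on $(e_5,\ldots,e_n)$ yields a permutation of $\{0,1,\ldots,n-1\}$, hence a non-zero top-dimensional class by Proposition \ref{prop:prop pi*}(3). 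The main obstacle is precisely this vanishing analysis, and it is what uses the full strength of the hypothesis $t\le 2^{s-1}$: it keeps $n\le 2^{s+1}$, which is exactly what allows the $2^{s-1}$ jump in the $e_1$-exponent to cross the killing threshold.
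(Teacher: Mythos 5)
Your argument is correct and is essentially the paper's own proof: multiply by $e_1^{m-4}$ to land in the top class of $\fln$, reduce $w_1^{2^s}\cdot w_2^{2^s}$ to $w_3^{2^s}$, note that every summand with $1\in T_1\cup T_2$ dies because its $e_1$-exponent is at least $n-4+2^{s-1}\ge n$, and identify the surviving permutation monomial $e_1^{n-4}e_2^{n-1}e_3^{n-2}e_4^{n-3}e_5^{n-5}\cdots e_{n-1}$. The only small attribution slip is in your last sentence: the condition $n\le 2^{s+1}$ (equivalently $t\le 2^{s-1}$) is what licenses the identity $\pi^*(w_1^{2^s}w_2^{2^s})=\pi^*(w_3^{2^s})$ via $e_i^{2^{s+1}}=0$, whereas the killing threshold for the cross terms only needs $2^{s-1}\ge 4$, which you correctly extract from $3\le t\le 2^{s-1}$.
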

\begin{proof}
To simplify the notation let us denote $d:=2^s+2^{s-1}$. Then we compute the value of the following 
top-dimensional element:
$$(e_1)^{d-4}\cdot \pi^*(w_1^{2^s}\cdot w_2^{2^s}\cdot w_3^{2^{s-1}}\cdot w_4^{t-3})\cdot
e_1^3\cdot e_2^2\cdot e_3 \cdot e_5^{d+t-5}\cdots e_{d+t-1}=$$
$$=\pi^*(w_3^d)\cdot (e_1\cdot e_2\cdot e_3\cdot e_4)^{t-3}\cdot 
e_1^{d+t-4}\cdot e_2^{t-1}\cdot e_3^{t-2}\cdot e_4^{t-3} \cdot e_5^{d+t-5}\cdots e_{d+t-1}=$$
$$=e_1^{d+t-4}\cdot e_2^{d+t-1}\cdot e_3^{d+t-2}\cdot e_4^{d+t-3} \cdot e_5^{d+t-5}\cdots e_{d+t-1}\ne 0$$
(the other summands vanish because the exponent of $e_1$ is at least $d+t$).
\end{proof}

\ \\

\section{Topological complexity of Grassmann manifolds}
\label{sec:TC of gkn}

Most of this section is devoted to the computation of lower bounds for the topological 
complexity of real Grassmannians but let us first mention the complex case which is much simpler: 
complex Grassmann
manifolds $G_k(\CC^n)$ are simply-connected and symplectic, so by Proposition \ref{prop:symplectic}
we have the precise value 

\begin{proposition}
Topological complexity of complex Grassmann manifold is $\TC(G_k(\CC^n))=2k(n-k)+1$.
\end{proposition}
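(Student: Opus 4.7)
The plan is to verify that $G_k(\CC^n)$ satisfies the hypotheses of Proposition \ref{prop:symplectic} and then read off the value.

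First I would recall (or briefly justify) that the complex Grassmannian $G_k(\CC^n)$ is a simply-connected closed manifold. Simple-connectedness follows from its Schubert cell decomposition, which (in the complex case) has only even-dimensional cells, so the 1-skeleton is a point and $\pi_1(G_k(\CC^n)) = 0$. Alternatively one can use the fibration $U(k)\times U(n-k)\to U(n)\to G_k(\CC^n)$ together with the fact that $U(n)$ is connected and $U(k)\times U(n-k)$ has connected fibers meeting $U(n)$ suitably.

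Next I would record that $G_k(\CC^n)$ is a (compact) Kähler manifold, hence symplectic; the standard reference is the Plücker embedding $G_k(\CC^n)\hookrightarrow \CC P^{N-1}$ which pulls back the Fubini--Study Kähler form. Its real dimension is $\dim_\RR G_k(\CC^n) = 2k(n-k)$, since complex dimension is $k(n-k)$.

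With these three facts in hand, Proposition \ref{prop:symplectic} applies directly and gives
\[
\TC(G_k(\CC^n)) = \dim_\RR G_k(\CC^n) + 1 = 2k(n-k)+1.
\]
There is no real obstacle here: the only (very mild) thing to check is that the invocation of Proposition \ref{prop:symplectic} is legitimate, i.e.\ that the standard simple-connectedness and Kähler structure of $G_k(\CC^n)$ are being used correctly. Everything else is bookkeeping about dimension.
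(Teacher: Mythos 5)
Your proof is correct and follows exactly the route the paper takes: the paper also simply observes that $G_k(\CC^n)$ is simply-connected and symplectic and then applies Proposition \ref{prop:symplectic}. Your extra justifications (even-dimensional Schubert cells for $\pi_1=0$, the Plücker embedding for the Kähler form) are accurate and merely fill in details the paper leaves implicit.
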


The fundamental group of real Grassmannians is $\ZZ_2$, so we may apply Proposition 
\ref{prop:small pi1} to obtain upper bounds for topological complexity. 

\begin{theorem}
\label{thm:upper bound}
Topological complexity of real Grassmannians is bounded above by 
$$\TC(\gkn)\le 2 k(n-k). $$
In fact, unless $k=1, n=2^d$ or $k=2, n=2^d+1$ we have a better estimate 
$$\TC(\gkn)\le 2 k(n-k)-1.$$
\end{theorem}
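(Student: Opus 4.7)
The plan is to derive both inequalities from Proposition \ref{prop:small pi1}, using Proposition \ref{prop:height w1} to check when the cohomological hypothesis of its sharper half is satisfied, and then to invoke Farber--Tabachnikov--Yuzvinsky together with Whitney's immersion theorem to handle the residual $k=1$ case.

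For the bound $\TC(\gkn)\le 2k(n-k)$, I would observe that $\pi_1(\gkn)=\ZZ_2$ for every valid $(k,n)$ apart from $G_1(\RR^2)=S^1$, so the first statement of Proposition \ref{prop:small pi1} gives the inequality directly. The only exception $S^1$ is handled by the classical $\TC(S^1)=2=2\cdot 1\cdot 1$, and it sits inside the excluded set $k=1,\,n=2^d$ (with $d=1$).

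For the improved inequality with $k\ge 2$, I would apply the second half of Proposition \ref{prop:small pi1}, which requires $w_1^{k(n-k)}=0$ in $H^*(\gkn;\ZZ_2)$. Writing $2^s<n\le 2^{s+1}$ and reading the height of $w_1$ off Proposition \ref{prop:height w1}: for $k=2$ the height is $2^{s+1}-2$ while the dimension is $2(n-2)$, so $w_1^{k(n-k)}=0$ precisely when $n\ne 2^s+1$, corresponding to the excluded case $k=2,\,n=2^d+1$. For $k\ge 3$ (under the standing convention $k\le n/2$) the height is at most $2^{s+1}-1$, dropping to $2^{s+1}-2$ in the subcase $k=3,\,n=2^s+1$; a short explicit comparison shows that $k(n-k)$ is always strictly larger than the height, so $w_1^{k(n-k)}=0$ and the improved bound follows with no further exclusions.

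The case $k=1$ needs a different argument, because $w_1^{n-1}$ is the top class of $\RR P^{n-1}$ and hence never vanishes, so Proposition \ref{prop:small pi1}'s second part is unavailable. Here I would invoke Farber--Tabachnikov--Yuzvinsky's identification $\TC(\RR P^m)=\mathrm{Imm}(\RR P^m)$ (valid for $m\ne 1,3,7$) combined with Whitney's bound $\mathrm{Imm}(M^m)\le 2m-1$ for $m\ge 2$, to conclude $\TC(\RR P^{n-1})\le 2n-3$ whenever $n\ne 2^d$. The main obstacle is the uniform verification $k(n-k)>\mathrm{height}(w_1)$ for $k\ge 3$ in the small-$n$ regime just above each power of two, together with the extra bookkeeping for the subcase $k=3,\,n=2^s+1$; once that case-by-case check is carried out, everything else reduces to straightforward applications of the cited results.
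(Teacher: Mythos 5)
Your route coincides with the paper's for everything except the $k=1$ case: the general bound $\TC(\gkn)\le 2k(n-k)$ comes from Proposition \ref{prop:small pi1} because $\pi_1(\gkn)=\ZZ_2$, and for $k\ge 2$ the improved bound follows from the second half of that proposition once one checks $\height(w_1)<k(n-k)$. Your reading of Proposition \ref{prop:height w1} is correct: for $k=2$ one has $\height(w_1)=2^{s+1}-2$, which equals $\dim G_2(\RR^n)=2(n-2)$ exactly when $n=2^s+1$, and for $k\ge 3$ (with $k\le n/2$) the inequality $k(n-k)\ge 3(n-3)>2^{s+1}-1$ holds throughout the range, including the subcase $k=3$, $n=2^s+1$. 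The paper obtains the same statement by citing Berstein directly, so this part of your argument is sound and essentially identical.

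The step that does not work as written is the improved bound for $k=1$. The Farber--Tabachnikov--Yuzvinsky theorem says that $\TC(\RR P^{m})$ equals the minimal $k$ such that $\RR P^{m}$ immerses in $\RR^{k-1}$, i.e.\ $\TC(\RR P^m)=\mathrm{Imm}(\RR P^m)+1$ where $\mathrm{Imm}$ denotes the minimal ambient Euclidean dimension. Whitney's theorem gives only $\mathrm{Imm}(\RR P^m)\le 2m-1$, hence $\TC(\RR P^m)\le 2m$ --- that is the \emph{general} bound $2k(n-k)$, not the improved one. To get $\TC(\RR P^m)\le 2m-1$ for $m$ not a power of two you need the strictly stronger fact that such $\RR P^m$ immerses in $\RR^{2m-2}$, which is not Whitney's theorem; the clean fix is to quote FTY's statement that $\TC(\RR P^m)=2m$ if and only if $m$ is a power of $2$, which is what the paper does. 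Relatedly, since $G_1(\RR^n)=\RR P^{n-1}$, the power-of-two condition should be imposed on $n-1$, not on $n$: your assertion that $\TC(\RR P^{n-1})\le 2n-3$ ``whenever $n\ne 2^d$'' fails, e.g., for $n=5$, where $\TC(\RR P^4)=8>7$. (The paper's own statement and proof slide between $n$ and $n-1$ in the same way, so you have reproduced rather than introduced this slip, but in a self-contained proof it must be resolved.)
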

\begin{proof}
By \cite{Berstein} the height of the class $w_1$ that generate $H^1(\gkn;\ZZ_2)$ is strictly smaller
than the dimension, except if $k=1$ or if $k=2$ and $n=2^d+1$. 
Moreover, if $k=1$, then by
\cite{FTY:TC of RPn} $\TC(\RR P^n)=2n$ if, and only if, $n$ is a power of 2, otherwise 
$\TC(\RR P^n)\le 2n-1$.
We don't know whether $\TC(G_2(\RR^{2^d+1}))$ can be equal to the stated upper bound $4(2^d-1)$. 
We will see below that cohomological lower estimates for the topological complexity are much smaller.
\end{proof}

By Proposition \ref{prop:TC properties}(5), in order to estimate from below topological complexity of 
$X$ we need a supply of elements in 
$$\Ker (\Delta^*\colon H^*(X\times X;\ZZ_2)\to H^*(X;\ZZ_2)).$$
For every $w\in H^*(X;\ZZ_2)$ we have 
$$\Delta^*(w\times 1+1\times w)=w\smile 1+1\smile w=w+w=0,$$
therefore 
$$z(w):=w\times 1+1\times w\in\Ker\Delta^*.$$
Farber \cite{Farber:TC} called elements of $\Ker\Delta^*$ \emph{zero-divisors} and defined 
\emph{zero-divisor cup-length} of $X$, denoted $\zcl(X)$, to be the maximal number of elements 
of $\Ker\Delta^*$ whose product is non-trivial (by analogy with the classical \emph{cup-length} from LS-
category theory, cf. \cite{CLOT}). In other words, $\zcl(X)=\nil(\Ker\Delta^*)-1$. Note that in general
one can define $\zcl(X)$ with respect to any multiplicative cohomology theory, but in this paper we consider
only ordinary cohomology with $\ZZ_2$-coefficients.

To determine the zero-divisor cup-length of Grassmannians we will need information about the 
height of basic zero-divisors of the form $z(w)$ and this can be easily expressed in terms of the 
height of $w$. For an integer 
$n\ge 0$ let $\rho(n)$ denote the minimal integral power of 2 that is strictly bigger than $n$ 
(e.g., $\rho(8)=\rho(11)=16$). In particular $\frac{\rho(n)}{2}\le n < \rho(n)$.

\begin{lemma}
\label{lem:height}
For every $w\in H^*(X;\ZZ_2)$ 
$$\mathrm{height}(z(w))=\rho(\mathrm{height}(w))-1.$$
\end{lemma}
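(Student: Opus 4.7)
The plan is to expand $z(w)^m = (w \times 1 + 1 \times w)^m$ by the binomial theorem in $\ZZ_2$-cohomology and determine the largest $m$ for which the expansion is nonzero. Set $h := \mathrm{height}(w)$ and $p := \rho(h)$, so $p$ is a power of $2$ with $p/2 \le h < p$. Since the two cross-product summands commute, the expansion reads
$$ z(w)^m \;=\; \sum_{j=0}^{m} \binom{m}{j}\, w^j \times w^{m-j} \;\in\; H^*(X\times X;\ZZ_2). $$
The K\"unneth theorem with field coefficients identifies $H^*(X\times X;\ZZ_2)$ with $H^*(X;\ZZ_2)\otimes H^*(X;\ZZ_2)$, and the distinct terms $w^j \times w^{m-j}$ for $j=0,\dots,m$ sit in distinct K\"unneth summands $H^{j|w|}(X;\ZZ_2)\otimes H^{(m-j)|w|}(X;\ZZ_2)$. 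They are therefore linearly independent whenever nonzero, so $z(w)^m \neq 0$ if and only if there exists $j$ with $\binom{m}{j}$ odd, $j \le h$, and $m-j \le h$.

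With this criterion in hand, both bounds follow from elementary mod-$2$ combinatorics of binomial coefficients. For the upper bound, take $m = p$: since $p$ is a power of $2$, the freshman's dream $(1+x)^p \equiv 1 + x^p \pmod 2$ forces $\binom{p}{j} \equiv 0 \pmod 2$ for $0 < j < p$, leaving only $w^p \times 1$ and $1 \times w^p$, both zero because $p > h$. Hence $z(w)^p = 0$. For the lower bound, take $m = p-1$: the binary expansion of $p-1$ consists entirely of $1$'s, so Lucas' theorem gives $\binom{p-1}{j} \equiv 1 \pmod 2$ for every $0 \le j \le p-1$. Choosing $j = h$, the constraint $h \le p-1$ holds (since $h < p$) and $(p-1)-h \le h$ holds (since $2h \ge p > p-1$), so $w^h \times w^{p-1-h}$ is a nonzero surviving contribution. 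Thus $z(w)^{p-1} \ne 0$, and combining gives $\mathrm{height}(z(w)) = p - 1 = \rho(h)-1$.

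The argument is essentially routine; the only point requiring a moment of care is the independence claim in the first step, which relies on $w$ having positive degree so that the K\"unneth decomposition separates the monomials $w^j \times w^{m-j}$ by bidegree. This is automatic in all the applications of the lemma in the paper.
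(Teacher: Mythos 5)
Your proof is correct and follows essentially the same route as the paper: expand $z(w)^m$ binomially, use that $\binom{2^s-1}{j}$ is always odd and $\binom{2^s}{j}$ is even for $0<j<2^s$, and exhibit the surviving term $w^h\times w^{\rho(h)-1-h}$ (which is nonzero because $2h\ge\rho(h)$). Your explicit verification of the K\"unneth independence and of the inequality $\rho(h)-1-h\le h$ makes precise two points the paper leaves implicit, but the argument is the same.
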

\begin{proof}
Let $n=\height(w)$, i.e. $w^n\neq 0$ and $w^{n+1}=0$. It is well-known that all binomial 
coefficients of the form ${2^s-1\choose k}$, $k=0,1,\ldots,2^s-1$ are odd and that all 
binomial coefficients of the form ${2^s\choose k}$, $k=1,\ldots,2^s-1$ are even.
As a consequence
$$z(w)^{2^s-1}=(w\times 1+1\times w)^{2^s-1}$$
has a non-zero coefficient at the term $w^n\times w^{2^s-1-n}\ne 0$. Therefore if, $\rho(n)=2^s$, then
$z(w)^{2^s-1}\ne 0$.
On the other hand 
$$z(w)^{2^s}=(w\times 1+1\times w)^{2^s}=(w^{2^s}\times 1+1\times w^{2^s})=0.$$
\end{proof}

\begin{lemma}
\label{lem:zcl}
If for some $u_1,\ldots,u_n\in H^*(X;\ZZ_2)$ and positive integers $k_1,\ldots,k_n$
$$u_1^{k_1}\cdot\ldots\cdot u_n^{k_n}\ne 0,$$
then 
$$z(u_1)^{\rho(k_1)-1}\cdot\ldots\cdot z(u_n)^{\rho(k_n)-1}\ne 0.$$
\end{lemma}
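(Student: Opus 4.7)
The plan is to expand each factor $z(u_i)^{\rho(k_i)-1}$ via the characteristic-$2$ binomial theorem, multiply out, and identify a specific surviving summand.

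Set $N_i:=\rho(k_i)-1=2^{s_i}-1$. By Lucas' theorem, every binomial coefficient $\binom{N_i}{j}$ is odd mod~$2$, so
$$z(u_i)^{N_i}\;=\;\sum_{j=0}^{N_i}u_i^j\times u_i^{N_i-j}.$$
Distributing the product yields
$$\prod_{i=1}^n z(u_i)^{N_i}\;=\;\sum_{\mathbf{j}}\Bigl(\prod_i u_i^{j_i}\Bigr)\times\Bigl(\prod_i u_i^{N_i-j_i}\Bigr),$$
summed over all tuples $\mathbf{j}=(j_1,\dots,j_n)$ with $0\le j_i\le N_i$.

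I would then single out the summand at $\mathbf{j}=\mathbf{k}=(k_1,\dots,k_n)$: call it $p\times q$, where $p:=\prod_i u_i^{k_i}$ and $q:=\prod_i u_i^{N_i-k_i}$. The hypothesis delivers $p\ne 0$. Because $\rho(k_i)\le 2k_i$ (the defining property noted just before Lemma~\ref{lem:height}), we have $N_i-k_i\le k_i$ and thus $p=q\cdot\prod_i u_i^{2k_i-N_i}$; non-triviality of $p$ forces $q\ne 0$. Hence $p\times q$ is itself a non-zero element of $H^*(X\times X;\ZZ_2)$.

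The main obstacle is to show that this distinguished summand $p\times q$ is not killed by cancellation with other summands. Under the K\"unneth isomorphism $H^*(X\times X;\ZZ_2)\cong H^*(X;\ZZ_2)\otimes H^*(X;\ZZ_2)$, the plan is to complete $\{p,q\}$ to a $\ZZ_2$-basis of $H^*(X;\ZZ_2)$ and track the coefficient of $p\otimes q$ in the basis expansion of $\sum_{\mathbf{j}}M_{\mathbf{j}}\otimes M'_{\mathbf{j}}$: the tuple $\mathbf{k}$ contributes $1$, and what remains is to check that contributions from all other $\mathbf{j}$ pair up mod~$2$. For the Grassmannian applications below (where the $u_i$ are Stiefel--Whitney classes $w_i$ of degree $|w_i|=i$) this bookkeeping can be carried out transparently by pulling the whole expression back via the injection $\pi^*$ of Proposition \ref{prop:prop pi*} to $H^*(\fln;\ZZ_2)$, in which monomials in $e_1,\dots,e_n$ are linearly independent and $\mathbf{k}$ emerges as the unique non-cancelling contributor.
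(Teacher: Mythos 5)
Your core argument is the same as the paper's: expand each $z(u_i)^{\rho(k_i)-1}$ using the fact that all binomial coefficients ${2^{s}-1\choose j}$ are odd, single out the summand $p\times q$ with $p=\prod_i u_i^{k_i}$ and $q=\prod_i u_i^{\rho(k_i)-k_i-1}$, and note that $q$ divides $p$ (since $\rho(k_i)-k_i-1<k_i$), so $p\ne 0$ forces $q\ne 0$ and hence $p\times q\ne 0$. The paper's proof stops exactly there, asserting that this term ``appears with a non-zero coefficient'' in the product.

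Where you go beyond the paper is in flagging that this assertion is not automatic: tuples $\mathbf{j}\ne\mathbf{k}$ can contribute K\"unneth summands in the same bidegree as $p\times q$, and these can cancel the distinguished term mod $2$. This concern is genuine, not pedantic: as literally stated the lemma is false. Take $X=\RR P^3$, $u_1=u_2=w$ the generator of $H^1$, $k_1=1$, $k_2=2$; then $u_1^{k_1}u_2^{k_2}=w^3\ne 0$, yet $z(w)^{\rho(1)-1}z(w)^{\rho(2)-1}=z(w)^4=w^4\times 1+1\times w^4=0$, consistent with Lemma~\ref{lem:height}, which gives $\height(z(w))=\rho(3)-1=3$. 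Here the coefficient of $w^3\times w$ is ${1\choose 1}{3\choose 2}+{1\choose 0}{3\choose 3}\equiv 0$ --- precisely the cancellation you worry about. So the lemma needs a further hypothesis, e.g.\ that the nonzero monomials $\prod_i u_i^{j_i}$ with $0\le j_i\le\rho(k_i)-1$ lying in the degree of $p$ are pairwise distinct and linearly independent; this is what must be checked in the applications, where the $u_i$ are the classes $w_1,\dots,w_k$ of distinct degrees. Your suggestion to verify it by pushing everything through $\pi^*$ into $H^*(\fln;\ZZ_2)$ is the right instinct, but you do not carry it out (and even there the symmetric polynomials $\pi^*(\prod_i w_i^{j_i})$ must be shown independent modulo the relations of the flag manifold, not just in the polynomial ring). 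In short: your proof has the same unfinished step as the paper's, but unlike the paper you have correctly located it and named the extra input required to close it.
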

\begin{proof}
It is sufficient to observe that the term 
$$\big( u_1^{k_1}\cdot\ldots\cdot u_n^{k_n}\big) \times 
\big( u_1^{\rho(k_1)-k_1-1}\cdot\ldots\cdot u_n^{\rho(k_n)-k_n-1}\big)$$
appears with a non-zero coefficient in the product 
$z(u_1)^{\rho(k_1)-1}\cdot\ldots\cdot z(u_n)^{\rho(k_n)-1}$, and that it
is non-trivial, because $\rho(k_i)-k_i-1\le k_i$ for $i=1,\ldots,n$.
\end{proof}

The question of the topological complexity of $G_1(\RR^n)=\RR P^{n-1}$ is considered to be solved 
by being reduced to the classical 
immersion problem for projective spaces (see \cite{FTY:TC of RPn}), so we pass to 
the next case and derive a lower bound for the topological complexity of $G_2(\RR^n)$ (recall our standing 
assumption that $k\le n/2$).

\begin{theorem}
\label{thm:g2n}
Assume $2^s<n\le 2^{s+1}$. Then 
$$z(w_1)^{2^{s+1}-1}\cdot z(w_2)^{\rho(n-1-2^s)-1}\ne 0$$
is a non-trivial product of zero-divisors of maximal length, so that 
$$\zcl((G_2(\RR^n))=2^{s+1}+\rho(n-1-2^s)-2$$ 
and
$$\TC(G_2(\RR^n))\ge 2^{s+1}+\rho(n-1-2^s)-1.$$
\end{theorem}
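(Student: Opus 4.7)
The plan has two stages: first, establish the nontriviality of the displayed product (which yields both the $\zcl$ lower bound and the $\TC$ lower bound); second, prove the product has maximal length, pinning $\zcl$ to the asserted value.

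For the first stage, I would invoke Lemma~\ref{lem:zcl} applied to the nontrivial monomial $w_1^{2^{s+1}-2}\cdot w_2^{n-2^s-1}$, whose nonvanishing in $H^{2(n-2)}(G_2(\RR^n);\ZZ_2)$ is recorded in the remark immediately following Proposition~\ref{prop:g2n}. With $u_1=w_1$, $k_1=2^{s+1}-2$, $u_2=w_2$, $k_2=n-2^s-1$, and observing that $\rho(2^{s+1}-2)=2^{s+1}$, the lemma yields directly
\[
z(w_1)^{2^{s+1}-1}\cdot z(w_2)^{\rho(n-1-2^s)-1}\ne 0.
\]
Combined with the cohomological bound in Proposition~\ref{prop:TC properties}(5), this gives the asserted $\TC$ estimate and the inequality $\zcl(G_2(\RR^n))\ge 2^{s+1}+\rho(n-1-2^s)-2$.

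For the second stage, since $H^*(G_2(\RR^n);\ZZ_2)$ is generated as a ring by $w_1$ and $w_2$, the ideal $\Ker\Delta^*$ is generated by the two basic zero-divisors $z(w_1)$ and $z(w_2)$, so the maximality claim reduces to showing $z(w_1)^p\cdot z(w_2)^q=0$ whenever $p+q\ge 2^{s+1}+\rho(n-1-2^s)-1$. When $p\ge 2^{s+1}$, this is immediate from $\mathrm{height}(z(w_1))=\rho(2^{s+1}-2)-1=2^{s+1}-1$, obtained by combining Lemma~\ref{lem:height} with Proposition~\ref{prop:height w1}. In the remaining case $p\le 2^{s+1}-1$ and $q\ge A:=\rho(n-1-2^s)$, I would factor $z(w_2)^q=z(w_2)^A\cdot z(w_2)^{q-A}$ and apply the Frobenius identity $z(w_2)^A=w_2^A\otimes 1+1\otimes w_2^A$ (valid since $A$ is a power of~$2$) to reduce the task to showing $z(w_1)^p\cdot(w_2^A\otimes 1+1\otimes w_2^A)=0$. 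The key algebraic input here is the relation $w_1^{2^s}\cdot w_2^{n-2^s}=0$ from Proposition~\ref{prop:g2n}, which combined with $A\ge n-2^s$ forces $w_1^i w_2^A=0$ for all $i\ge 2^s$.

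The main obstacle lies in the middle range of indices: after applying the height bound on $w_1$ and the vanishing $w_1^iw_2^A=0$ for $i\ge 2^s$, one still must handle summands of the form $w_1^iw_2^A\otimes w_1^{p-i}$ and their mirrors $w_1^{p-i}\otimes w_1^iw_2^A$ for $i<2^s$ and $p-i<2^{s+1}-1$, where neither factor is forced to vanish by Proposition~\ref{prop:g2n} alone. The cleanest approach I see is to transport the computation through the injection $\pi^*\otimes\pi^*\colon H^*(G_2(\RR^n)^2;\ZZ_2)\hookrightarrow H^*(\fln\times\fln;\ZZ_2)$ provided by Proposition~\ref{prop:prop pi*}, reducing the vanishing to an identity
\[
(e_1+e_2+f_1+f_2)^p\cdot(e_1^Ae_2^A+f_1^Af_2^A)\cdot(e_1e_2+f_1f_2)^{q-A}=0
\]
in $H^*(\fln\times\fln;\ZZ_2)$ that can be attacked using the relations $e_i^n=f_i^n=0$ and a careful multinomial analysis, taking advantage of the fact that $A+2^s\ge n$ kills many of the would-be surviving monomials. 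This bookkeeping is the technical heart of the argument.
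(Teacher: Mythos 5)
Your first stage is correct and is essentially the paper's own argument: Proposition \ref{prop:g2n} (or the remark following it) supplies a nonzero monomial $w_1^{a}\cdot w_2^{\,n-2^s-1}$ with $\rho(a)=2^{s+1}$, and Lemma \ref{lem:zcl} converts it into the displayed nontrivial product of zero-divisors, which yields the $\TC$ estimate and the inequality $\zcl(G_2(\RR^n))\ge 2^{s+1}+\rho(n-1-2^s)-2$.

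The second stage has a genuine gap: the maximality claim (equivalently, the reverse inequality for $\zcl$) is precisely the part you defer to ``a careful multinomial analysis'' and never carry out, and the toolkit you propose for it would not suffice. The required vanishings $z(w_1)^p\cdot z(w_2)^q=0$ are not term-by-term phenomena. Already for $G_2(\RR^4)$ (where $s=1$, $A=2$, and the claim is that all products of length $5$ vanish) one computes
$$z(w_1)^2\cdot z(w_2)^3=w_1^2w_2\times w_2^2+w_2^2\times w_1^2w_2,$$
where both tensor factors are nonzero and the sum vanishes only because of the relation $w_2^2=w_1^2w_2$ in $H^4(G_2(\RR^4);\ZZ_2)$. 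Correspondingly, the identity you display in $H^*(\fln\times\fln;\ZZ_2)$ cannot be deduced from $e_i^n=f_i^n=0$ together with exponent bookkeeping: one needs the full ideal generated by the positive-degree elementary symmetric polynomials (for instance $e_1^3e_2+e_1e_2^3=e_1^2e_2^2$ in $H^4(\mathrm{Flag}(\RR^4);\ZZ_2)$, which is not a consequence of $e_i^4=0$). The paper closes this half by a different and shorter route that avoids expanding in the flag manifold: from the vanishing $w_1^{2^s}\cdot w_2^{\,n-2^s}=0$ of Proposition \ref{prop:g2n} it deduces that any nonzero monomial $w_1^aw_2^b$ with $b\ge n-2^s$ forces $a<2^s$, and combines this with $\height(w_2)=n-2$ to conclude that such monomials can only give rise to shorter zero-divisor products, so the extremal one comes from a monomial with $b\le n-2^s-1$ and hence $\rho(b)\le\rho(n-1-2^s)$. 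Until you either adopt an argument of that kind or actually execute (and repair) the cancellation analysis, you have proved only the lower bounds for $\zcl$ and $\TC$, not the exact value of $\zcl(G_2(\RR^n))$ nor the phrase ``of maximal length.''
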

\begin{proof}
We proved in Proposition \ref{prop:g2n} that 
$w_1^{2^s}\cdot w_2^{n-2^s-1}\ne 0$ and that $w_1^{2^s}\cdot w_2^{n-2^s}=0$ in the cohomology of $G_2(\RR^n)$.
Non-triviality of the first product implies that zero-divisor cup-length is at least 
$$\zcl(G_2(\RR^n))\ge \rho(2^s)-1+\rho(n-2^s-1)-1=2^{s+1}+\rho(n-2^s-1)-2.$$
On the other hand, triviality of the second product implies that if a non-trivial product 
$w_1^a\cdot w_2^b\ne 0$ has $b\ge n-2^s$, then $a<2^s$, which yields a shorter product 
of zero-divisors (indeed, $\height(w_2)=n-2$ by \cite{DuttaKhare2002} and so $\rho(w_2)\le 2^{s+1}$). 
Therefore $z(w_1)^{2^{s+1}-1}\cdot z(w_2)^{\rho(n-1-2^s)-1}$ is the longest non-trivial product of zero
divisors.

Estimate for topological complexity follows from Proposition \ref{prop:TC properties}(4), as
$\TC(X)\ge\zcl(X)+1$.
\end{proof}

Observe that, by the above argument, in order to achieve maximal zero-divisor cup-length one must look 
for products of the form 
$w_1^a\cdot w_2^b\cdot w_3^c \cdots$ where $a,b,c,\ldots$ are powers of two (and as big as possible). 
This is why products with maximal cup-length as in \cite{Stong} often don't give products with maximal
zero-divisor cup-length.

\begin{example}
The first non-trivial case to consider is $G_2(\RR^4)$. It is 4-dimensional and since $w_1^4=0$ 
(Proposition \ref{prop:height w1}), Proposition \ref{prop:small pi1} implies that  
$\TC(G_2(\RR^4))\le 7$.
A lower estimate can be computed from the above theorem as 
$\TC(G_2(\RR^4))\ge \rho(3)+\rho(1)-1=5$, therefore $5\le \TC(G_2(\RR^4))\le 7$.
\end{example}

\begin{example}
Note that the gap between the upper bound of Theorem \ref{thm:upper bound} and 
the lower bound of Theorem \ref{thm:g2n} grows with $n$. For example, by applying mentioned results
we obtain 
$$23 \le\TC(G_2(\RR^{13}))\le 43.$$
This is perhaps not surprising in view of the irregularities and gaps between the lower estimates 
and the dimensional upper bounds for the topological complexity of projective spaces. 
As we mentioned before, computation of $\TC(\RR P^n)$ was reduced to the famous immersion problem
for projective spaces - see tables in  https://www.lehigh.edu/~dmd1/imms.html, whose 
computation has required an immense body of work. 
Nevertheless, estimates of Theorem \ref{thm:g2n} represent a notable improvement with respect
to those implied by Propositions \ref{prop:TC properties}(3) and \ref{prop:LScat properties}(4)
as they would give only $\TC(G_2(\RR^{13}))\ge 19$. 
\end{example}

To estimate topological complexity of $G_3(\RR^n)$ we use Proposition \ref{prop:g3n} and consider
three cases. We remind the reader that if a 
product of Stiefel-Whitney classes is non-zero in $H^*(\gkn;\ZZ_2)$ then the same product is 
non-zero in $H^*(G_k(\RR^m);\ZZ_2)$ for all $m\ge n$.

\begin{theorem}
\label{thm:g3n}
Assume $2^s<n\le 2^{s+1}$ and let $t:=n-2^s$. 
The results on zero-divisor cup-length and topological complexity of $G_3(\RR^n)$ are summarized 
in the following table:
$$
\begin{array}{c|c|c|c}
t & \zcl(G_3(\RR^n)) & \text{realized by} & \TC(G_3(\RR^n))\ge\\[1mm]
\hline\\[-3mm]
1                & 3\cdot 2^s-2       & z(w_1)^{2^{s+1}-1}\!\cdot z(w_2)^{2^{s}-1}    & 3\cdot 2^s-2 \\[1mm]
2                & 3\cdot 2^s-1       & z(w_1)^{2^{s+1}-1}\!\cdot z(w_2)^{2^{s}-1}\! \cdot z(w_3)& 3\cdot 2^s-1 
\\[1mm]
3,\ldots,2^s & 4\cdot 2^s+\rho(t-3)-3 & z(w_1)^{2^{s+1}-1}\!\cdot z(w_2)^{2^{s+1}-1}\!\cdot 
z(w_3)^{\rho(t-3)-1} &
4\cdot 2^s+\rho(t-3)-2\\[1mm]
\end{array}
$$
Note that for about half of the range ($n\ge 2^s+2^{s-1}+3$) the exact value is $\zcl(G_3(\RR^n))=5\cdot 2^s-3$.
\end{theorem}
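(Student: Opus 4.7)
The plan is to treat the three ranges of $t$ separately. In each range the lower bound on $\zcl(G_3(\RR^n))$ will come from feeding the corresponding non-triviality result of Proposition \ref{prop:g3n} into Lemma \ref{lem:zcl}, while the matching upper bound comes from the complementary vanishings together with the height cap on $w_1$ from Proposition \ref{prop:height w1}. The claimed inequality for $\TC$ is then immediate from Proposition \ref{prop:TC properties}(5), namely $\TC(X)\ge\zcl(X)+1$.

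For the lower bounds, in case $t=1$ I would apply Lemma \ref{lem:zcl} to $w_1^{2^s}\cdot w_2^{2^{s-1}}\ne 0$ with exponents $k_1=2^s$ and $k_2=2^{s-1}$, obtaining $z(w_1)^{2^{s+1}-1}\cdot z(w_2)^{2^s-1}\ne 0$ of length $3\cdot 2^s-2$. In case $t=2$ the same mechanism applied to $w_1^{2^s}\cdot w_2^{2^{s-1}}\cdot w_3\ne 0$ appends a factor $z(w_3)$, giving length $3\cdot 2^s-1$. In case $3\le t\le 2^s$, feeding $w_1^{2^s}\cdot w_2^{2^s}\cdot w_3^{t-3}\ne 0$ into the lemma produces $z(w_1)^{2^{s+1}-1}\cdot z(w_2)^{2^{s+1}-1}\cdot z(w_3)^{\rho(t-3)-1}\ne 0$ of length $4\cdot 2^s+\rho(t-3)-3$, where the $z(w_3)$-factor is vacuous at $t=3$ since $\rho(0)-1=0$.

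For the upper bound on $\zcl$, the starting observation is that $w_1,w_2,w_3$ generate $H^*(G_3(\RR^n);\ZZ_2)$ as a ring, so $z(w_1),z(w_2),z(w_3)$ generate $\Ker\Delta^*$ as an ideal and hence $\zcl(G_3(\RR^n))$ equals the maximum of $a+b+c$ over triples with $z(w_1)^a\cdot z(w_2)^b\cdot z(w_3)^c\ne 0$. Expanding such a product via Lucas' theorem, its non-triviality requires binary submasks $i\subseteq a$, $j\subseteq b$, $k\subseteq c$ for which both $w_1^iw_2^jw_3^k$ and $w_1^{a-i}w_2^{b-j}w_3^{c-k}$ are non-zero in $H^*(G_3(\RR^n);\ZZ_2)$. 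The cap $\height(w_1)\le 2^{s+1}-1$ from Proposition \ref{prop:height w1} forces $a\le 2^{s+1}-1$, and the vanishing statements in Proposition \ref{prop:g3n} bound the admissible $b$ and $c$ once $a$ saturates this bound, so no such triple exceeds the lengths realized above.

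The main obstacle will be completing this upper-bound case analysis, and in particular the borderline cases $t=1$ and $t=2$, where the $w_2$-exponent is capped at $2^{s-1}$ rather than $2^s$; the vanishing $w_1^{2^s}\cdot w_2^{2^{s-1}}\cdot w_3=0$ means that incorporating more $w_3$-factors forces a reduction of either $a$ or $b$, and one must verify that this trade-off never improves $a+b+c$. The case $t\ge 3$ is cleaner, since both $w_1$ and $w_2$ saturate the $2^{s+1}-1$ cap and the only remaining freedom lies in the $w_3$-exponent, for which the relevant height bound is exactly $\rho(t-3)-1$. Finally the closing remark $\zcl=5\cdot 2^s-3$ for $n\ge 2^s+2^{s-1}+3$ follows directly, since then $t-3\ge 2^{s-1}$ forces $\rho(t-3)=2^s$.
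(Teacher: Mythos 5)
Your proposal follows the paper's proof essentially verbatim: the lower bounds come from feeding the three non-triviality statements of Proposition \ref{prop:g3n} into Lemma \ref{lem:zcl}, the upper bounds from the complementary vanishings and the height caps, and the $\TC$ estimate from Proposition \ref{prop:TC properties}(5); the closing computation $\rho(t-3)=2^s$ for $n\ge 2^s+2^{s-1}+3$ is also exactly the paper's. Your sketch of the upper-bound mechanism (ideal generation of $\Ker\Delta^*$ by the $z(w_i)$ and the Lucas/submask criterion) is in fact more explicit than the paper's one-sentence assertion that ``products that would give longer products of zero-divisors are trivial,'' so the acknowledged remaining case analysis is no more of a gap than what the published proof leaves to the reader.
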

\begin{proof}
The three cases correspond to the points (1),(2) and (3) of Proposition \ref{prop:g3n}. In each of those we 
proved that certain products in the cohomology of $G_3(\RR^n)$ are non-trivial, and that products that 
would give longer products of zero-divisors are trivial. The values in the table follow by application of 
Lemma \ref{lem:zcl} and Proposition \ref{prop:TC properties}(5).

In particular, if $n\ge 2^s+2^{s-1}+3$, then $\rho(n-2^s-3)=2^s$, therefore $\zcl(G_3(\RR^n))=5\cdot 2^s-3$.
\end{proof}

\begin{example}
\label{ex:g3}
For $G_3(\RR^{11})$ the above theorem yields the estimate $\TC(G_3(\RR^{11}))\ge 31$, which is again 
a considerable improvement from the category lower bound $\TC(G_3(\RR^{11}))\ge 20$.
\end{example}

To describe the estimates of $\TC(\gkn)$ for $k>3$ we will consider for simplicity only 'generic' pairs $(k,n)$.

\begin{theorem}
\label{thm:gkn}
Let $4\le k\le 2^{s-1}$ and $2^s+k\le n\le 2^{s+1}$. Then 
$$\TC(\gkn)\ge 4\cdot 2^s-1 \ \   \text{for}\ \ n\le 2^s+2^{s-1}+2$$ 
and 
$$\TC(\gkn)\ge 5\cdot 2^s-2 \ \ \text{for}\ \ n\ge 2^s+2^{s-1}+3.$$
\end{theorem}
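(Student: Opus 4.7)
The plan is to combine the non-triviality results of Proposition \ref{prop:w1w2w3} with the zero-divisor lifting Lemma \ref{lem:zcl} and the cohomological lower bound of Proposition \ref{prop:TC properties}(5). Since every claim has the shape $\TC(\gkn) \ge \zcl(\gkn)+1$, it is enough in each case to exhibit one product of zero-divisors of the stated length that is non-zero in $H^*(\gkn \times \gkn;\ZZ_2)$.

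For the first regime $2^s+k \le n \le 2^s+2^{s-1}+2$, I would start from Proposition \ref{prop:w1w2w3}(1), which gives $w_1^{2^s}\cdot w_2^{2^s}\ne 0$ in $H^*(G_k(\RR^{2^s+k});\ZZ_2)$. By the inclusion-of-subcomplexes remark preceding Proposition \ref{prop:g2n}, this non-triviality persists in $H^*(\gkn;\ZZ_2)$ for all $n\ge 2^s+k$. Noting $\rho(2^s)=2^{s+1}$, Lemma \ref{lem:zcl} applied with $u_1=w_1$, $u_2=w_2$, $k_1=k_2=2^s$ yields
$$z(w_1)^{2^{s+1}-1}\cdot z(w_2)^{2^{s+1}-1}\ne 0,$$
a product of $2(2^{s+1}-1)=4\cdot 2^s-2$ zero-divisors. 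Hence $\zcl(\gkn)\ge 4\cdot 2^s-2$ and therefore $\TC(\gkn)\ge 4\cdot 2^s-1$.

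For the second regime $n\ge 2^s+2^{s-1}+3$ (in the generic range where $n\ge 2^s+2^{s-1}+k$, which is why the theorem restricts to generic pairs), I would invoke Proposition \ref{prop:w1w2w3}(2) to obtain $w_1^{2^s}\cdot w_2^{2^s}\cdot w_3^{2^{s-1}}\ne 0$ in $H^*(\gkn;\ZZ_2)$. Now Lemma \ref{lem:zcl}, applied with the additional factor $u_3=w_3$ at exponent $k_3=2^{s-1}$ (so $\rho(k_3)=2^s$), produces
$$z(w_1)^{2^{s+1}-1}\cdot z(w_2)^{2^{s+1}-1}\cdot z(w_3)^{2^s-1}\ne 0,$$
a non-trivial product of length $2(2^{s+1}-1)+(2^s-1)=5\cdot 2^s-3$. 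Together with Proposition \ref{prop:TC properties}(5) this gives $\TC(\gkn)\ge 5\cdot 2^s-2$.

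There is no substantive obstacle here: both bounds follow directly once Proposition \ref{prop:w1w2w3} is in hand. The only points requiring care are (i) invoking the subcomplex inclusion to pull non-triviality from $G_k(\RR^{2^s+k})$ (respectively $G_k(\RR^{2^s+2^{s-1}+k})$) up to $\gkn$, which is legitimate under the standing hypothesis $n\ge 2^s+k$ (respectively, in the generic range), and (ii) keeping track of the values of $\rho$ at powers of $2$, namely $\rho(2^j)=2^{j+1}$, which is what makes the arithmetic $2^{s+1}-1$ and $2^s-1$ come out cleanly.
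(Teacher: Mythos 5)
Your proof is correct and follows essentially the same route as the paper: both rest on Proposition \ref{prop:w1w2w3}(1) and (2), Lemma \ref{lem:zcl} with $\rho(2^s)=2^{s+1}$ and $\rho(2^{s-1})=2^s$, and Proposition \ref{prop:TC properties}(5), yielding the zero-divisor products $z(w_1)^{2^{s+1}-1}z(w_2)^{2^{s+1}-1}$ and $z(w_1)^{2^{s+1}-1}z(w_2)^{2^{s+1}-1}z(w_3)^{2^s-1}$ of lengths $4\cdot 2^s-2$ and $5\cdot 2^s-3$ respectively. Your parenthetical observation that the second bound as proved really requires $n\ge 2^s+2^{s-1}+k$ (not merely $n\ge 2^s+2^{s-1}+3$) is a legitimate point that the paper only addresses implicitly via its restriction to ``generic'' pairs $(k,n)$.
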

\begin{proof}
In the first case $w_1^{2^s}\cdot w_2^{2^s}\ne 0$ by Proposition \ref{prop:w1w2w3}(1), while 
in the second case $w_1^{2^s}\cdot w_2^{2^s}\cdot w_3^{2^{s-1}}\ne 0$ by Proposition \ref{prop:w1w2w3}(2).
Then the stated estimates follow by Proposition \ref{prop:TC properties}(5) and Lemma \ref{lem:zcl}.
Note that we do not claim that the mentioned products give products of zero-divisors of maximal length.
\end{proof}

\begin{example}
By the above theorem we have $\TC(G_5(\RR^{13}))\ge 31$, compared to the category estimate 
$\TC(G_5(\RR^{13}))\ge 16$, and $\TC(G_6(\RR^{27}))\ge 78$, compared to the category estimate 
$\TC(G_6(\RR^{27}))\ge 32$.
\end{example}

The final result of this section is a direct consequence of Proposition \ref{prop:g4n} and gives 
the zero-divisor cup-length of $G_4(\RR^n)$ for  approximately half of dimensions. The proof follows 
the lines of the previous theorems and is thus left to the reader. 

\begin{theorem}
\label{thm:g4n}
Assume $2^s+2^{s-1}+3\le n \le 2^{s+1}$ and let $t:=n-2^s-2^{s-1}$. Then
$$\zcl(G_4(\RR^n))=5\cdot 2^s+\rho(t-3)-4,$$
realized by the product
$$z(w_1)^{2^{s+1}-1}\!\cdot z(w_2)^{2^{s+1}-1}\!\cdot z(w_3)^{2^{s}-1}\!\cdot 
z(w_4)^{\rho(t-3)-1}\ne 0.$$
\end{theorem}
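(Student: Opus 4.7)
The plan parallels the proofs of Theorems \ref{thm:g2n}, \ref{thm:g3n}, and \ref{thm:gkn}: combine the non-vanishing monomial supplied by Proposition \ref{prop:g4n} with Lemma \ref{lem:zcl} to build a long non-trivial product of zero-divisors, then argue that no monomial in $w_1,w_2,w_3,w_4$ can do better.

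For the lower bound I start from Proposition \ref{prop:g4n}, which provides
$$w_1^{2^s}\cdot w_2^{2^s}\cdot w_3^{2^{s-1}}\cdot w_4^{t-3}\ne 0$$
in $H^*(G_4(\RR^n);\ZZ_2)$. Since $\rho(2^s)=2^{s+1}$ and $\rho(2^{s-1})=2^s$, Lemma \ref{lem:zcl} yields
$$z(w_1)^{2^{s+1}-1}\cdot z(w_2)^{2^{s+1}-1}\cdot z(w_3)^{2^s-1}\cdot z(w_4)^{\rho(t-3)-1}\ne 0.$$
Summing the four exponents gives $2(2^{s+1}-1)+(2^s-1)+(\rho(t-3)-1)=5\cdot 2^s+\rho(t-3)-4$, hence the asserted lower bound $\zcl(G_4(\RR^n))\ge 5\cdot 2^s+\rho(t-3)-4$.

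For the matching upper bound I invoke the heuristic recorded after Theorem \ref{thm:g2n}: maximizing $\sum(\rho(a_i)-1)$ over non-trivial monomials $w_1^{a_1}w_2^{a_2}w_3^{a_3}w_4^{a_4}$ forces each $a_i$ to be a power of two. Within this restricted search, Proposition \ref{prop:height w1} caps $a_1\le 2^s$; Proposition \ref{prop:w1w2w3}(3) caps $a_3\le 2^{s-1}$ once $a_1=a_2=2^s$; and the missing step is the triviality
$$w_1^{2^s}\cdot w_2^{2^s}\cdot w_3^{2^{s-1}}\cdot w_4^{\rho(t-3)}=0.$$
I would verify this by a Stong-type flag-manifold computation in the spirit of Proposition \ref{prop:g4n}: apply $\pi^*$, multiply by the standard top-dimensional factor of the form $e_1^{k-1}\cdots e_{k-1}\cdot e_{k+1}^{n-k-1}\cdots e_{n-1}$ together with a suitable ``balancing'' monomial in the $e_i$, and check that every summand in the expansion now contains some $e_i$ raised to an exponent at least $n$, so Proposition \ref{prop:prop pi*}(2) kills it.

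The main obstacle is precisely this last triviality check. Because $\rho(t-3)$ depends piecewise on $t$, one must verify that replacing $t-3$ by the next power of two in the exponent of $w_4$ forces at least one $e_i$-exponent past $n-1$ in every surviving summand. This is a direct but careful parity-of-exponents bookkeeping, structurally identical to the flag-manifold calculation already carried out in the proof of Proposition \ref{prop:g4n}, but with slightly heavier indexing. Once this vanishing is in hand, the upper bound matches the lower bound and the theorem follows, with the $\TC$-estimate of course obtained from Proposition \ref{prop:TC properties}(5).
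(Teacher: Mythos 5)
Your lower-bound argument is precisely the paper's: the paper proves this theorem by combining Proposition \ref{prop:g4n} with Lemma \ref{lem:zcl} and records nothing beyond that (the proof is explicitly ``left to the reader''), and your bookkeeping --- $\rho(2^s)=2^{s+1}$, $\rho(2^{s-1})=2^s$, total length $2(2^{s+1}-1)+(2^s-1)+(\rho(t-3)-1)=5\cdot 2^s+\rho(t-3)-4$ --- is correct. So the ``realized by'' clause and the inequality $\zcl(G_4(\RR^n))\ge 5\cdot 2^s+\rho(t-3)-4$ are fully established, by the same route the paper intends.

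The genuine gap is the matching upper bound, which you flag but do not close, and your proposed route to it is too weak even in outline. The observation after Theorem \ref{thm:g2n} is a heuristic for \emph{constructing} long products, not a reduction of the upper bound to monomials with power-of-two exponents: since Lemma \ref{lem:zcl} rewards an exponent $a_i$ with $\rho(a_i)-1$, one must bound $\sum_i(\rho(a_i)-1)$ over \emph{all} non-vanishing monomials $w_1^{a_1}w_2^{a_2}w_3^{a_3}w_4^{a_4}$ (and, reading the definition of $\zcl$ literally, over products of arbitrary zero-divisors). In particular, the single vanishing $w_1^{2^s}w_2^{2^s}w_3^{2^{s-1}}w_4^{\rho(t-3)}=0$ does not exclude a hypothetical non-zero monomial with $a_1=a_2=2^s$ and $a_3,a_4\ge 2^{s-1}$, which would give length $6\cdot 2^s-4>5\cdot 2^s+\rho(t-3)-4$ and which is \emph{not} ruled out by the dimension of $G_4(\RR^n)$ once $t\ge 2^{s-3}+4$; one must actually prove it vanishes (it does: it suffices to kill $w_1^{2^s}w_2^{2^s}w_3^{2^{s-1}}w_4^{2^{s-1}}=w_3^{3\cdot 2^{s-1}}w_4^{2^{s-1}}$, where a pigeonhole on the four variables shows every summand of its image under $\pi^*$ contains some $e_i^{2^{s+1}}=0$), and similarly for the remaining exponent patterns. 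A minor slip besides: Proposition \ref{prop:height w1} gives $\height(w_1)=2^{s+1}-1$ for $k=4$, not $a_1\le 2^s$, though this does not change $\rho(a_1)$. In fairness, the paper supplies none of this case analysis either, so your proposal reproduces everything the paper actually writes down; but as a proof of the asserted \emph{equality} it is incomplete.
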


\ \\

\section{Monotonicity of category and topological complexity for increasing sequences of Grassmannians}
\label{sec:monotonicity}

We already mentioned that $\cat(G_1(\RR^n))=\cat(\RR P^{n-1})=n$, so clearly $m\le n$ implies that
$\cat(G_1(\RR^m))\le \cat(G_1(\RR^n))$. This is not a coincidence, in fact $\RR P^m$ is the
$m$-skeleton of $\RR P^n$ with respect to the standard CW-decomposition of projective spaces
and it is a well-known fact (see \cite[Theorem 1.66]{CLOT}) that $\cat(X^{(m)})\le\cat(X)$ for
every non-contractible CW-complex $X$. The analogous comparison of topological complexities 
of projective spaces is less obvious. Farber, Tabachnikov and Yuzvinsky \cite{FTY:TC of RPn}
proved that for $n\ne 1,3,7$ $\TC(\RR P^n)$ equals the minimal $k$ such that $\RR P^n$ admits
an immersion in $\RR^{k-1}$. Therefore, $\TC(G_1(\RR^n))$ is clearly an increasing function of $n$
(see Example \ref{ex:TC RPn} for a more direct proof). 

Intuitively, the spaces $G_k(\RR^n)$ become more complicated by increasing the dimension, so one
should naturally expect that their category and topological complexity increase as well. 
But this is not always the case, as we can see from the sequence of spheres
$S^1\subseteq S^2\subseteq S^3\subseteq \ldots \subseteq  S^{\infty}$
whose topological complexities form the alternating sequence $2,3,2,3,\ldots$ and $\TC(S^\infty)=1$.
The case of Grassmannians is even more complicated because (for $k>1$) $G_k(\RR^m)$ is not even
close to being a skeleton of $G_k(\RR^n)$. In spite of that we will show that for $k=2,3$ 
the category of $G_k(\RR^n)$ is indeed increasing with $n$. The case of topological complexity 
is more complicated but we will still manage to prove some partial results. 

Throughout this section we will view $G_k(\RR^m)$ as a subcomplex of $G_k(\RR^n)$ 
($2k\le m\le n$) with respect to the standard decomposition described in Section 
\ref{sec:Grassmannians}. Our first observation is that the cells of the respective 
decompositions coincide up to dimension $m-k$, that is to say 
$$G_k(\RR^n)^{(m-k)}= G_k(\RR^m)^{(m-k)}.$$
In \cite{Pavesic:Monotonicity} we proved the following result:

\begin{theorem}(\cite[Theorem 3.6]{Pavesic:Monotonicity})
Let $X$ be a connected CW-complex $X$, and let $A$ be a subcomplex of $X$ containing
$X^{d}$. If $\dim(A)< d+\cat(X)-1$, then $\cat(A)\le\cat(X)$.
\end{theorem}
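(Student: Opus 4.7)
The plan is to transport a categorical open cover of $X$ to one of $A$ and then deform the resulting ambient contractions to lie inside $A$, using the combination of the connectivity of the pair $(X,A)$ with the multiplicity of the cover.

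Let $c=\cat(X)$ and fix an open cover $\{U_i\}_{i=1}^{c}$ of $X$ together with contracting homotopies $H_i\colon U_i\times I\to X$ deforming each inclusion $U_i\hookrightarrow X$ to a constant map at $x_i\in X$; since $X^{0}\subseteq X^{d}\subseteq A$, the points $x_i$ may be chosen in $A$. Intersecting gives an open cover $V_i:=U_i\cap A$ of $A$ by $c$ sets, each contractible in $X$ via $H_i|_{V_i}$. It would suffice to show that these restricted contractions can be replaced by contractions inside $A$.

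The obstruction-theoretic side is straightforward. The hypothesis $A\supseteq X^{d}$ together with cellular approximation yields $\pi_{j}(X,A)=0$ for $j\le d$, i.e.\ the inclusion $A\hookrightarrow X$ is $d$-connected. Each $H_i|_{V_i}\colon V_i\times I\to X$ sends $V_i\times\{0,1\}$ into $A$, and the obstructions to deforming it rel those endpoints into $A$ live in
$$H^{k}\!\bigl(V_i\times I,\,V_i\times\{0,1\};\,\pi_{k-1}(X,A)\bigr)\qquad(1\le k\le\dim V_i+1),$$
which vanish automatically for $k\le d+1$. Since $\dim V_i\le\dim A<d+c-1$, the potentially surviving obstructions sit in at most the $c-2$ dimensions $d+2\le k\le d+c-1$.

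Closing this gap is the crux of the argument, and the main obstacle. Handled one $V_i$ at a time the direct approach only suffices when $\dim A\le d$; the stronger bound $\dim A<d+c-1$ must exploit the simultaneous presence of $c$ open sets. The natural device, following the argument in \cite{Pavesic:Monotonicity}, is an iterative reshaping of the cover: whenever a cell of $X\setminus A$ produces an obstruction inside some $V_i$, it is re-absorbed into a neighbouring $V_j$ by shrinking $V_i$, enlarging $V_j$, and re-routing the contraction of the transferred region through $H_j$. Each such reshaping trades one dimension of obstruction for one unit of multiplicity from the cover, so the budget $\dim A<d+c-1$ is exactly what allows the induction to terminate after at most $c-2$ steps. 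The technical challenge is the bookkeeping — one must verify, after every reshaping, that the modified family is still an open cover of $A$, that the re-routed contractions glue continuously, and that no obstruction is reintroduced in a dimension that has already been treated.
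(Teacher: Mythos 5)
There is a genuine gap: the step you yourself identify as ``the crux of the argument'' is never carried out. Your setup (restrict a categorical cover $\{U_i\}_{i=1}^{c}$ of $X$ to $V_i=U_i\cap A$ and try to compress each contracting homotopy into $A$ using the $d$-connectivity of the pair $(X,A)$) only yields the conclusion when $\dim A$ is at most about $d$: treating each $V_i$ separately, the obstructions to compressing $H_i|_{V_i}$ rel endpoints live in $H^{k}(V_i\times I, V_i\times\partial I;\pi_{k}(X,A))\cong H^{k-1}(V_i;\pi_{k}(X,A))$, and nothing forces these to vanish once $k-1$ reaches $d$. The proposed remedy --- iteratively ``reshaping'' the cover so that each offending cell is re-absorbed into a neighbouring $V_j$, trading ``one dimension of obstruction for one unit of multiplicity'' --- is not a mechanism that exists in obstruction theory; obstruction classes are not localized on single cells that can be shuffled between open sets, and no invariant is exhibited that decreases with each reshaping, so there is no reason the process terminates or even preserves the property that each piece contracts in $A$. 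In short, the entire content of the theorem (the gain of $c-1$ over the trivial bound) is asserted, not proved. Note also that the present paper contains no proof of this statement; it is imported from \cite{Pavesic:Monotonicity}, so there is no in-paper argument for your sketch to shadow.

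The actual argument in \cite{Pavesic:Monotonicity} converts the multiplicity $c$ into connectivity \emph{before} any obstruction theory is done, by working with the Schwarz genus/Ganea characterization of category rather than with open covers. One uses that $\cat(Y)\le n$ iff the $n$-th Ganea fibration $G_n(Y)\to Y$, whose fibre is the $n$-fold join of $\Omega Y$, admits a section. Since $A\supseteq X^{d}$, the inclusion $A\hookrightarrow X$ is $d$-connected, hence $\Omega A\to\Omega X$ is $(d-1)$-connected, and the induced map of $c$-fold joins $(\Omega A)^{*c}\to(\Omega X)^{*c}$ is $(d+c-2)$-connected; consequently the comparison map from $G_c(A)$ to the pullback of $G_c(X)$ over $A$ is $(d+c-2)$-connected. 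A section of $G_c(X)\to X$ restricts to a section over $A$, and the hypothesis $\dim A< d+c-1$, i.e.\ $\dim A\le d+c-2$, is exactly what allows this section to be lifted (by elementary obstruction theory over the base $A$) to a section of $G_c(A)\to A$, giving $\cat(A)\le c$. This is where each of the $c$ sets of the cover contributes one unit of connectivity via the join; your open-cover bookkeeping has no analogue of this step.
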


In our case we obtain that $\cat(G_k(\RR^m))\le \cat(G_k(\RR^n))$
provided that $k(m-k)< m-k + \cat(G_k(\RR^n))-1$, or equivalently, if 
$\cat(G_k(\RR^n))> (k-1)(m-k)+1$.

\begin{theorem}
If $k\le 3$ and $2k\le m\le n$, then
$\cat(G_k(\RR^m))\le \cat(G_k(\RR^n))$.
\end{theorem}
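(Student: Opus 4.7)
The strategy is to invoke the recalled theorem of \cite{Pavesic:Monotonicity} with $X = G_k(\RR^n)$ and subcomplex $A = G_k(\RR^m)$. Since the Schubert decompositions of the two manifolds agree in dimensions up to $m-k$, the subcomplex $A$ contains the entire $(m-k)$-skeleton of $X$, so the hypothesis is met with $d = m-k$. As $\dim(A) = k(m-k)$, the numerical condition $\dim(A) < d + \cat(X) - 1$ reduces to
$$\cat(G_k(\RR^n)) \geq (k-1)(m-k) + 2,$$
which it suffices to verify for every $m$ with $2k \leq m \leq n$ (the case $m = n$ is trivially an equality of categories, but it is convenient to verify the bound uniformly).

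For $k = 1$ the inequality reduces to $\cat(G_1(\RR^n)) \geq 2$, which is automatic since $\RR P^{n-1}$ is non-contractible. For $k = 2$ it becomes $\cat(G_2(\RR^n)) \geq m$, whose hardest instance $m = n$ follows from the non-trivial product $w_1^{2^{s+1}-2}\cdot w_2^{n-2^s-1}$ highlighted in the remark after Proposition \ref{prop:g2n}: for $2^s < n \leq 2^{s+1}$ this product has length $n + 2^s - 3$, so the cohomological estimate of Proposition \ref{prop:LScat properties}(4) yields $\cat(G_2(\RR^n)) \geq n + 2^s - 2 \geq n$, using $2^s \geq 2$ for $n \geq 4$.

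For $k = 3$ the required estimate is $\cat(G_3(\RR^n)) \geq 2m - 4$, with hardest instance $\cat(G_3(\RR^n)) \geq 2n - 4$. I would establish this by handling the three regimes of Proposition \ref{prop:products in g3n} separately, computing in each case the length of the displayed non-trivial product as a function of the binary parameters $s$, $p$, $t$, and verifying it is at least $2n - 5$. The main obstacle is this three-case binary arithmetic, especially the middle regime $n = 2^{s+1} - 2^p + 1 + t$, where the required bound reduces to $2^{p-1} \geq t$ and thus matches exactly the hypothesis $0 < t < 2^{p-1}$ imposed in that proposition; the two outer regimes are comparatively immediate. Once this is done, Proposition \ref{prop:LScat properties}(4) gives $\cat(G_3(\RR^n)) \geq 2n - 4$, and the recalled theorem closes the argument. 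Note that cruder bounds such as the height of $w_1$ alone would not suffice here, since the estimate $(*)$ is essentially tight in some cases (e.g., $n = 2^{s+1}$ with $k = 3$).
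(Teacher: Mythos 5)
Your proposal is correct and follows essentially the same route as the paper: apply the monotonicity theorem from \cite{Pavesic:Monotonicity} with $d=m-k$, reduce to the lower bound $\cat(G_k(\RR^n))\ge (k-1)(m-k)+2$, and verify it via the non-trivial products $w_1^{2^{s+1}-2}w_2^{n-2^s-1}$ for $k=2$ and the three cases of Proposition \ref{prop:products in g3n} for $k=3$ (the paper likewise writes out only the middle regime $n=2^{s+1}-2^p+1+t$, where the margin is $2^{p-1}+1-t>0$, exactly the inequality you isolate). The only cosmetic issue is the dangling reference to ``the estimate $(*)$'', which should point to your displayed inequality.
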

\begin{proof}
The case $k=1$ is clear, so let us consider $k=2$. By the above discussion it is sufficient
to show that $\cat(G_2(\RR^n))> n-2$ for all $n\ge 4$. By Proposition \ref{prop:height w1}, if
$2^s< n\le 2^{s+1}$, then $w_1^{2^{s+1}-2}\cdot w_2^{n-2^s-1}\ne 0$, therefore
$$\cat(G_2(\RR^n)) \ge 2^{s+1}-2+n-2^s=n+2^s-2> n-2.$$

If $k=3$, we must check that $\cat(G_3(\RR^n))> 2n-5$ for all $n\ge 6$. To do this, we consider
the three cases of Proposition \ref{prop:products in g3n}. If $n=2^{s+1}-2^p+1+t$ ($1\le p<s,\
0<t<2^{p-1}$), then 
$$\cat(G_3(\RR^n))-(2n-5)\ge (2^{s+1}-1+2^{s+1}-3\cdot 2^{p-1}-1+t)-2(2^{s+1}-2^p+1+t)+5=$$
$$=2^{p-1}+1-t>0.$$
The remaining two cases are verified similarly.
\end{proof}

For $k=4$ our approach works only under some additional assumptions.

\begin{theorem}
If $m\le 2^s+1$, then $\cat(G_4(\RR^m))\le \cat(G_4(\RR^{2^s+1}))$.

Otherwise, if $n=2^s+2^p+1+t$ ($0\le p< s,\ 0\le t<2^p$), then $\cat(G_4(\RR^m))\le 
\cat(G_4(\RR^n))$, provided that $n-m > \frac{2^p+2t-2}{3}$.
\end{theorem}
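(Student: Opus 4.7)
The plan is to follow exactly the same strategy as in the previous theorem, applying \cite[Theorem 3.6]{Pavesic:Monotonicity}. Since $G_4(\RR^m)$ is a subcomplex of $G_4(\RR^n)$ containing the common $(m-4)$-skeleton and $\dim G_4(\RR^m)=4(m-4)$, the cited theorem (with $d=m-4$) yields $\cat(G_4(\RR^m)) \le \cat(G_4(\RR^n))$ as soon as
$$4(m-4) < (m-4) + \cat(G_4(\RR^n)) - 1,$$
equivalently $\cat(G_4(\RR^n)) > 3m - 11$. So both claims reduce to producing a sufficiently long non-trivial cup product in $H^*(G_4(\RR^n);\ZZ_2)$, which is exactly what Proposition \ref{prop:products in g4n} supplies.

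For the first statement I take $n=2^s+1$. By Proposition \ref{prop:products in g4n} the product $w_1^{2^{s+1}-2}\cdot w_2^{2^s-5}$ is non-zero, so
$$\cat(G_4(\RR^{2^s+1})) \ge (2^{s+1}-2)+(2^s-5)+1 = 3\cdot 2^s - 6.$$
The inequality $3\cdot 2^s - 6 > 3m-11$ amounts to $m < 2^s + 5/3$, and since $m$ is an integer this is precisely $m\le 2^s+1$.

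For the second statement, with $n=2^s+2^p+1+t$, Proposition \ref{prop:products in g4n} provides the non-trivial product $w_1^{2^{s+1}-2}\cdot w_2^{2^s+2^{p+1}-5}\cdot w_4^{t}$, of length $3\cdot 2^s + 2^{p+1} + t - 7$, so that $\cat(G_4(\RR^n)) \ge 3\cdot 2^s + 2^{p+1} + t - 6$. The condition $\cat(G_4(\RR^n)) > 3m-11$ then reads $3m < 3\cdot 2^s + 2^{p+1} + t + 5$, and substituting $2^s = n - 2^p - 1 - t$ simplifies this to
$$3(n-m) > 2^p + 2t - 2,$$
which is precisely the stated hypothesis.

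The work is essentially bookkeeping in the same spirit as the preceding theorem; there is no serious obstacle beyond matching the dyadic parameters $s$, $p$, $t$ to the Stong products of Proposition \ref{prop:products in g4n} and verifying that the arithmetic threshold from the subcomplex theorem rearranges to the exact form claimed.
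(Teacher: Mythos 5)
Your proposal is correct and follows essentially the same route as the paper: reduce to the threshold $\cat(G_4(\RR^n))>3m-11$ via the subcomplex theorem from \cite{Pavesic:Monotonicity}, then verify it using the Stong products of Proposition \ref{prop:products in g4n}. The arithmetic in both cases matches the paper's computation exactly.
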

\begin{proof}
To prove the claim we must show that $\cat(G_4(\RR^m))> 3m-11$. In the first case we have
by Proposition \ref{prop:products in g4n} that $w_1^{2^{s+1}-2} \cdot w_2^{2^s-5}\ne 0$, therefore  
$$\cat(G_4(\RR^{2^s+1}))\ge 2^{s+1}+2^s-6=3\cdot (2^s+1)-9>3m-11$$
because $m\le 2^s+1$.

Similarly, if $n=2^s+2^p+1+t$, then $w_1^{2^{s+1}-2} \cdot w_2^{2^s+2^{r+1}-5}\cdot w_4^t\ne 0$,
therefore 
$$\cat(G_4(\RR^r))\ge 2^{s+1}+2^s+2^{p+1}+t-6=3\big(2^s+2^p+1+t-\frac{2^p+2t-2}{3}\big)-11>
3m-11.$$
\end{proof}

\begin{example}
By comparing estimates for $\cat(G_4(\RR^n))$ we find that 
$$\cat(G_4(\RR^8))\le \cat(G_4(\RR^9))\le \ldots \le\cat(G_4(\RR^{13}))$$
but we cannot conclude that $\cat(G_4(\RR^{13}))\le \cat(G_4(\RR^{14}))$, In fact, we can only 
claim a weaker estimate $\cat(G_4(\RR^{12}))\le \cat(G_4(\RR^{14}))$.
\end{example}

In order to derive analogous results for topological complexity we rely on the following 
result from \cite{Pavesic:Monotonicity}.

\begin{theorem}(\cite[Theorem 3.1]{Pavesic:Monotonicity}). 
Let $X$ be a connected CW-complex $X$, and let $A$ be a subcomplex of $X$ containing
$X^{d}$. If $2  \dim(A)< d+\TC(X)-1$, then $\TC(A)\le\TC(X)$.
\end{theorem}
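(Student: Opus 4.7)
The plan is to apply Schwarz's characterization of topological complexity as the sectional category of the endpoint evaluation fibration, $\TC(Y) = \sct(\ev_Y\colon Y^I \to Y\times Y)$, and combine it with obstruction theory that leverages the connectivity of the pair $(X, A)$. Write $n := \TC(X)$. By Schwarz's theorem in its fibrewise-join form, $n$ is the smallest integer such that the $n$-fold fibrewise join $J^n(\ev_X) \to X\times X$ admits a section $\sigma$. Pulling back along $A\times A\hookrightarrow X\times X$ yields a section $\sigma'$ of $J^n(\ev_X)|_{A\times A}\to A\times A$; this step already gives $\sct(\ev_X|_{A\times A})\le \TC(X)$ by monotonicity of sectional category under pullback.

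The inclusion $A^I\hookrightarrow X^I$ factors through $X^I|_{A\times A}$ and induces a fibrewise inclusion $J^n(\ev_A)\hookrightarrow J^n(\ev_X)|_{A\times A}$ over $A\times A$, whose fibre over $(a,b)$ is the $n$-fold join inclusion $P_A(a,b)^{*n}\hookrightarrow P_X(a,b)^{*n}$. It then suffices to lift $\sigma'$ through this fibrewise inclusion, since a section of $J^n(\ev_A)$ witnesses $\TC(A)\le n$ by Schwarz's theorem applied to $\ev_A$.

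The obstructions to this lift lie in cohomology classes of $A\times A$ with coefficients in the relative homotopy groups of the fibre pair. Since $A$ contains $X^{d}$, the pair $(X,A)$ is $d$-connected, so the path-space inclusion (homotopy-equivalent to $\Omega A\hookrightarrow \Omega X$) has pair-connectivity at least $d-1$. Using the equivalence $Y^{*n}\simeq \Sigma^{n-1}Y^{\wedge n}$ together with standard connectivity bounds for smashes and suspensions, one shows that the $n$-fold fibrewise join has pair-connectivity at least $d+n-2$. All obstructions therefore vanish provided $\dim(A\times A)=2\dim A\le d+n-2$, which is exactly the hypothesis $2\dim A < d + \TC(X) - 1$.

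The main obstacle will be to carry out the connectivity estimate for the iterated join rigorously, particularly because the Grassmannians $\gkn$ of interest have $\pi_1=\ZZ_2$, so $\Omega X$ is disconnected and the elementary join-suspension formula must be handled with care. One would either split into path components and run the argument componentwise, or work with an equivariant refinement of the obstruction theory incorporating the $\pi_1$-action on the fibres. An alternative route avoiding the join machinery would be a direct cover-refinement: start with an open cover of $X\times X$ of size $\TC(X)$, restrict to $A\times A$, and successively modify each local section along the skeletal filtration of $A\times A$, absorbing the extra $2\dim A - d$ strata above the $d$-skeleton into the additional sets made available by the hypothesis $2\dim A - d < \TC(X)-1$.
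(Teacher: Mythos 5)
The paper does not prove this statement---it is imported verbatim as Theorem 3.1 of \cite{Pavesic:Monotonicity}---and your outline follows essentially the same route as that reference: Schwarz's fibrewise-join criterion for $\TC$, restriction of a section of $J^n(\ev_X)$ to $A\times A$, and a lift through the fibrewise inclusion $J^n(\ev_A)\hookrightarrow J^n(\ev_X)|_{A\times A}$ by obstruction theory, using that the pair of fibres $\bigl((\Omega X)^{*n},(\Omega A)^{*n}\bigr)$ is $(d+n-2)$-connected so that all obstructions vanish on the $2\dim A$-dimensional base exactly under the stated hypothesis. Your numerology is correct, and the disconnectedness of $\Omega X$ that you flag is indeed the one delicate point, but it is absorbed by the conservative join-connectivity estimate together with the fact that $n$-fold joins of nonempty spaces are $(n-2)$-connected, so ordinary obstruction theory (with local coefficients) applies once $n\ge 3$.
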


In the case of the inclusion $G_k(\RR^m)\subseteq G_k(\RR^n)$ the condition becomes 
$$2k(m-k)<(m-k)+\TC(G_k(\RR^n))-1,$$
so we have

\begin{corollary}
Let $2k\le m\le n$. If $\TC(G_k(\RR^n))> (2k-1)(m-k)+1$, then $\TC(G_k(\RR^m))\le \TC(G_k(\RR^n))$.
\end{corollary}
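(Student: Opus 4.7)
The plan is to derive this corollary as a direct specialization of the general skeletal monotonicity theorem quoted just above (\cite[Theorem 3.1]{Pavesic:Monotonicity}), applied to the pair $A = G_k(\RR^m)$, $X = G_k(\RR^n)$. Using the standard Schubert decomposition, the linear embedding $G_k(\RR^m)\hookrightarrow G_k(\RR^n)$ realizes $A$ as a genuine subcomplex of $X$, so the theorem does apply and there is nothing exotic to set up.

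The first concrete step is to identify the largest skeleton of $X$ sitting inside $A$. As recorded in Section \ref{sec:Grassmannians} for the one-step inclusion $G_k(\RR^n)\hookrightarrow G_k(\RR^{n+1})$, Schubert cells of the smaller Grassmannian are exactly those whose Schubert symbol satisfies $\sigma_k\le n-k$, and the two decompositions agree through dimension $n-k$. Iterating this observation along the chain $G_k(\RR^m)\subseteq G_k(\RR^{m+1})\subseteq\cdots\subseteq G_k(\RR^n)$, I obtain $G_k(\RR^n)^{(m-k)}\subseteq G_k(\RR^m)$, so in Pavešić's theorem I would take $d := m-k$.

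With $d = m-k$ and $\dim(A) = k(m-k)$ in hand, the hypothesis $2\dim(A) < d + \TC(X) - 1$ of the quoted theorem becomes
$$2k(m-k) < (m-k) + \TC(G_k(\RR^n)) - 1,$$
which rearranges to exactly the assumed inequality $\TC(G_k(\RR^n)) > (2k-1)(m-k) + 1$. The theorem then delivers $\TC(A)\le \TC(X)$, i.e.\ $\TC(G_k(\RR^m))\le \TC(G_k(\RR^n))$, as desired. There is no real obstacle here: the only point that needs any genuine care is the skeletal-containment identification in the first step, and once that is pinned down the rest is pure arithmetic, which is why the statement is packaged as a corollary rather than a theorem in its own right.
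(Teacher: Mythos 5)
Your proposal is correct and matches the paper's argument exactly: the paper also applies \cite[Theorem 3.1]{Pavesic:Monotonicity} to the subcomplex $G_k(\RR^m)\subseteq G_k(\RR^n)$ with $d=m-k$ (using the observation, recorded at the start of the section, that the Schubert decompositions agree through dimension $m-k$), and the same arithmetic rearrangement yields the stated hypothesis. Nothing is missing.
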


\begin{example}
\label{ex:TC RPn}
Let $k=1$ and $m<n$. Then we have 
$$\TC(G_1(\RR^m))\ge \cat(G_1(\RR^n))=n > (2\cdot 1-1)(m-1)+1,$$ 
therefore
$\TC(G_1(\RR^m))\le \TC(G_1(\RR^n))$ whenever $m\le n$, as we already mentioned at the beginning 
of this section.
\end{example}

\begin{example}
For $k=2$ the condition in the above Corollary becomes $\TC(G_2(\RR^n))>3m-5$. For instance,
we proved in Theorem \ref{thm:g2n} that $\TC(G_2(\RR^{2^s+1}))\ge 2^{s+1}$, therefore 
$\TC(G_2(\RR^{2^s+1}))\ge \TC(G_2(\RR^m))$ whenever $m<\frac{2^{s+1}+5}{3}$. Hence, we have 
$\TC(G_2(\RR^9))\ge \TC(G_2(\RR^6))$, but we do not know whether $\TC(G_2(\RR^9))\ge \TC(G_2(\RR^7))$.
\end{example}


\end{document}